\newcommand{\N}{\mathbb N}	
\newcommand{\Fr}{\mathrm{Fr}}
\newcommand{\RMS}{\mathrm{RMS}}
\newcommand{\CPS}{\mathrm{CPS}}
\newcommand{\CP}{\mathrm{CP}}
\newcommand{\RM}{\mathrm{RM}}
\newcommand{\Z}{\mathbb Z}	
\newcommand{\R}{\mathbb R}	
\newcommand{\E}{\mathbb E}
\newcommand{\Card}{\mathrm{Card}}	
\newcommand{\Var}{\mathrm{Var}}	
\renewcommand{\P}{\mathbb P}	
\renewcommand{\L}{\mathcal L}
\newcommand{\eps}{\varepsilon}
\theoremstyle{plain}
\newtheorem{theorem}{Theorem}[section]
\newtheorem{lemma}[theorem]{Lemma}
\newtheorem{proposition}[theorem]{Proposition}
\theoremstyle{definition}
\newtheorem{definition}{Definition}[section]
\title{Richardson’s model and the contact
process with stirring: long time behavior.}
\date{}
\begin{document}

\begin{flushleft}
\LARGE \textbf{Richardson's model and the contact process with stirring: long time behavior}\\
\vspace{0,5cm}
\large \textbf{Régine Marchand, Irène Marcovici and Pierrick Siest}
\end{flushleft}

\begin{flushleft}

Université de Lorraine, IECL, 54000 Nancy\\
Université de Rouen Normandie, LMRS, 76801 Saint-Étienne-du-Rouvray\\
E-mail address: \texttt{regine.marchand@univ-lorraine.fr, irene.marcovici@univ-rouen.fr, pierrick.siest@univ-lorraine.fr}\\
\vspace{0,5cm}

\end{flushleft}

\noindent \textbf{Abstract.} We study two famous interacting particle systems, the so-called Richardson's model and the contact process, when we add a stirring dynamics to them. We prove that they both satisfy an asymptotic shape theorem, as their analogues without stirring, but only for high enough infection rates, using couplings and restart techniques. We also show that for Richardson's model with stirring, for high enough infection rates, each site is forever infected after a certain time almost surely. Finally, we study weak and strong survival for both models on a homogeneous infinite tree, and show that there are two phase transitions for certain values of the parameters and the dimension, which is a result similar to what is proved for the contact process.

\section{Introduction}

In 1973, Richardson introduced the so-called \emph{Richardson's model} \cite{Richardson1973}, which can be seen as the simplest interacting particle system on $\Z^d$ to model the evolution over time of the spread of an epidemic: each infected site infects its neighbors at a fixed rate $\lambda>0$. In 1974, Harris \cite{Harris1974} introduced the \emph{contact process}, which is obtained by adding a healing dynamics to Richardson's model: each infected site heals at rate $1$. The contact process is a non-permanent model, that is, the epidemic can die out. A natural question is then: for which infection rates $\lambda$ has the epidemic a positive probability to spread forever? Harris \cite{Harris1974} showed that, in all dimensions $d$, the contact process exhibits a phase transition. This means that there exists a constant $\lambda_c^{\CP}(d)\in (0,+\infty)$, called the \emph{critical parameter}, such that the probability, starting from a single infected point, that the epidemic spreads forever is continuous in $\lambda$, and positive if and only if $\lambda>\lambda_c(d)$. 

The aim of this paper is to obtain various results on Richardson's model and the contact process, when we add a stirring dynamics to them: two neighboring sites exchange their states at a fixed rate $\nu>0$. When these models are considered to model the propagation of an epidemic, stirring represents the movements of individuals. Durrett and Neuhauser \cite{DurrettNeuhauser1994} initiated the study of this kind of models in the nineties: they worked on the phase transition of some interacting particle systems in which the particles are stirred at a fast rate. Katori \cite{Katori1994} and Konno \cite{Konno1994} obtained a more precise description of the asymptotic behavior of the critical parameter $\lambda_c(\nu)$, seen as a function of the stirring rate $\nu$. More recently, some improvements were made on the same question, see Berezin and Mytnik \cite{BerezinMytnik2014}, Levit and Valesin \cite{LevitValesin2017}, and Mytnik and Shlomov \cite{MytnikShlomov2021}. All these results are about the asymptotic behavior of the critical parameter $\lambda_c(\nu)$. Here we are interested in growth properties of Richardson's model and the contact process with stirring, as detailed below. 

For Richardson's model, as well as for the contact process when the epidemic spreads forever, determining the asymptotic behavior of the set of the (once) infected sites is a natural question. One of the most famous results for this type of question was obtained by Cox and Durrett \cite{CoxDurrett1981} for first-passage percolation on $\Z^d$: they proved an \emph{asymptotic shape theorem} for the set of sites reached before time $t$, when $t$ goes to infinity. 
Asymptotic shape theorems are proved, by Richardson \cite{Richardson1973} for Richardson's model, and by Durrett--Griffeath \cite{DurrettGriffeath1982} and Bezuidenhout--Grimmett \cite{BezuidenhoutGrimmett1990} for the contact process, for the set of (once) infected sites, using ergodic subadditivity theory. Since then, a lot of variations of the contact process were studied. Some examples are the \emph{contact process in random environment}, introduced by Bramson, Durrett and Schonmann \cite{BramsonDurrettSchonmann1991}, the \emph{boundary modified contact process}, (Durrett and Schinazi \cite{DurrettSchinazi2000}), the \emph{contact process in random environment} (Garet and Marchand \cite{GaretMarchand2012}), the \emph{contact process with aging} (Deshayes \cite{Aurélia2014}), or more recently the \emph{contact process in an evolving random environment} (Seiler and Sturm \cite{Seiler2023}). For each of these models, an asymptotic shape theorem is proved or conjectured. Here we prove an asymptotic shape theorem for Richardson's model and contact process with stirring, for large enough infection rates. We prove it by showing that these two models satisfy some linear growth properties. This allows us to use Deshayes and Siest's \cite{DeshayesSiest2024} result: they proved that a class of random linear growth models, which also contains some of the other models mentioned in this paragraph, satisfies an asymptotic shape theorem. 

We also worked on the question of \emph{fixation}: does a site stay infected forever after a certain time? It is not obvious for Richardson's model with stirring, since an infected site can be healed if it exchanges its state with a healthy neighbor. But if an infected site is surrounded by other infected sites, it is possible that it cannot get healed by an exchange after a certain time, once the epidemic developed sufficiently. We prove that for large enough infection rates, each site fixes on the infected state forever, after a certain time.

Concerning the contact process with stirring, a site cannot be infected forever, since it recovers at rate $1$, independently of the rest of the configuration. However, it can be healthy forever after a certain time, even if the epidemic survives. This question is linked to the notions of \emph{weak survival} and \emph{strong survival}. We say that there is weak survival if the process survives with positive probability, and that there is strong survival if the set of times $t\in \R_+$ such that the origin is infected at time $t$ is infinite, with positive probability. Since the state $\emptyset$ is absorbing, strong survival clearly implies weak survival. If there is weak survival but not strong survival, then almost surely each site is healthy forever after a certain time, despite positive probability of the presence of infected sites at any time: the set of (actually) infected sites "goes away" from any finite box over time. 

For the contact process on $\Z^d$, it is known (see \cite{LiggettIPS}) that once the epidemic has a positive probability to spread forever, there is strong survival. On a homogeneous tree, that is, an infinite tree in which all the vertices have the same degree, the behavior of the contact process is very different: Pemantle \cite{Pemantle1992} and Liggett \cite{Liggett1996} proved that for certain infection rates, there is weak survival, but not strong survival, for the contact process on a homogeneous tree of degree $d\ge 3$. Note that Stacey \cite{Stacey1996} also proved that there are two phase transitions for all degrees $d\ge 3$, but he did not use bounds on critical parameters in his proof. We prove a similar result for Richardson's model and contact process with stirring, strongly inspired by Liggett's techniques (see \cite{LiggettSIS}), and we obtain explicit bounds for the critical parameter of the strong survival for both the RMS and the CPS. 

In Section $2$, we define Richardson's model with stirring and the contact process with stirring. In Section $3$, we state the results we prove for these two models. In Sections $4$ to $6$, we prove our results. In Section $7$, we discuss our results and propose some open questions.

\section{Definitions of the models} \label{Section Définition du modèle}

All the processes that will be studied or used in this paper are nearest neighbor interacting particle systems on the graph $(\Z^d,\E^d)$, where $d\in \N^*$ and $\E^d:=\{(x,y)\in (\Z^{d})^2:\sum_{i=1}^d |x_i-y_i|=1\}$ is the set of oriented edges between nearest neighbors of $\Z^d$, or on a homogeneous tree $T_d$ of degree $d+1$. We give only the definitions on $(\Z^d,\E^d)$: they translate straightforwardly on $T_d$. The set $\{0,1\}^{\Z^d}$ is called the set of \emph{configurations}: vertices (which we call \emph{sites} in the rest of the paper) with value $1$ are called \emph{infected}, and the others are called \emph{healthy}.
Since the map
$$\begin{array}{cccc}
& \{0,1\}^{\Z^d} & \to & \Z^d \\
 & \omega & \mapsto & \{x\in \Z^d ~:~ \omega(x)=1\} \\
\end{array}$$
is bijective, then we can see a configuration as a subset of $\Z^d$ containing the infected sites. 

For $x,y\in \Z^d$ and $\eta \in \{0,1\}^{\Z^d}$, we denote by:
\begin{itemize}
    \item $\eta_{x}$ the configuration identical to $\eta$, except for $\eta(x)$ which is replaced by $1-\eta(x)$ (the state of site $x$ is flipped).
    \item $\eta_{x,y}$ the configuration identical to $\eta$, except for $\eta(x)$ and $\eta(y)$, which are replaced respectively by $\eta(y)$ and $\eta(x)$ (we exchange the states of sites $x$ and $y$). Note that $\eta_{x,y}=\eta_{y,x}$.
\end{itemize}

Let $\lambda,\gamma$ and $\nu$ be positive real numbers. Three interactions can occur in our models: the infection of a healthy site, the healing of an infected site, and the stirring, which corresponds to the exchange of states of two neighboring sites. We thus define three different infinitesimal generators, associated with each interaction.

We start by the generator $\mathcal{L}^{I}$, which describes the infections, and appears in all the models. We denote by $\mathcal{C}_0(\{0,1\}^{\Z^d})$ the set of continuous functions from $\{0,1\}^{\Z^d}$ to $\R$ which depend only on finitely many coordinates of the configuration. A site becomes infected at a rate equal to the number of its infected neighbors. For all $f\in \mathcal{C}_0(\{0,1\}^{\Z^d})$ and every configuration $\eta\in \{0,1\}^{\Z^d}$, we set:
\begin{align*}
    \mathcal{L}^{I} f(\eta) &=\sum_{x\in\Z^d} \left[\mathds{1}_{\{\eta(x)=0\}} \sum_{y\sim x} \eta(y) \right]\left[f(\eta_{x})-f(\eta)\right].
\end{align*}
The second generator $\mathcal{L}^{H}$ describes the healing dynamics, and appears in the contact processes. A site becomes healthy at rate $1$. For all $f\in \mathcal{C}_0(\{0,1\}^{\Z^d})$ and every configuration $\eta\in \{0,1\}^{\Z^d}$, we set:
\begin{align*}
    \mathcal{L}^{H} f(\eta) &=\sum_{x\in\Z^d}\mathds{1}_{\{\eta(x)=1\}} \left[f(\eta_{x})-f(\eta)\right].
\end{align*}
The third generator $\mathcal{L}^{S}$ describes the stirring dynamics. Two neighboring sites exchange their states at rate $1$. Note that the exchange modifies the configuration only if one site is infected and the other is healthy. For all $f\in \mathcal{C}_0(\{0,1\}^{\Z^d})$ and every configuration $\eta\in \{0,1\}^{\Z^d}$, we set:
\begin{align}
    \mathcal{L}^{S} f(\eta) &=\sum_{e=(x,y)\in\E^d}\mathds{1}_{\{\eta(x)=1,\eta(y)=0\}}\left[f(\eta_{x,y})-f(\eta)\right] \notag\\
    &=\sum_{e=\{x,y\}\in\E^d}\mathds{1}_{\{\eta(x)\ne\eta(y)\}}\left[f(\eta_{x,y})-f(\eta)\right]. \label{Générateur du mélange}
\end{align}

Now we define the models that will be used in this paper. The first one is \emph{Richardson's model} with \emph{infection rate} $\lambda$ (denoted by $\RM(\lambda)$). In this model, there are only infections. Richardson's model is a Markov process with generator $\mathcal{L}^{\RM}_\lambda$, given by:
\begin{align*}
    \mathcal{L}^{\RM}_\lambda =\lambda\mathcal{L}^{I}.
\end{align*}
The \emph{contact process} with infection rate $\lambda$ and \emph{healing rate} $\gamma$ (denoted by $\CP(\lambda,\gamma)$) is an extension of Richardson's model, to which we add a healing dynamics. The contact process is a Markov process with generator $\mathcal{L}^{\CP}_{\lambda,\gamma}$, given by:
\begin{align*}
    \mathcal{L}^{\CP}_{\lambda,\gamma}=\lambda\mathcal{L}^{I}+\gamma\mathcal{L}^{H}.
\end{align*}
\emph{Richardson's model with stirring} with infection rate $\lambda$ and \emph{stirring rate} $\nu$ (denoted by $\RMS(\lambda,\nu)$) is an extension of Richardson's model, to which we add a stirring dynamics. Richardson's model with stirring is a Markov process with generator $\mathcal{L}^{\RMS}_{\lambda,\nu}$, given by:
\begin{align}
    \mathcal{L}^{\RMS}_{\lambda,\nu} &=\lambda\mathcal{L}^{I}+\nu\mathcal{L}^{S}.\label{Définitions des modèles: générateur du RMS}
\end{align}
The \emph{contact process with stirring} with infection rate $\lambda$, healing rate $\gamma$, and stirring rate $\nu$ (denoted by $\CPS(\lambda,\gamma,\nu)$), is an extension of the contact process, to which we add a stirring dynamics. We can also see it as an extension of Richardson's model with stirring, to which we add a healing dynamics. All the three interactions can happen: infection, healing and stirring. The contact process with stirring is a Markov process with generator $\mathcal{L}^{\CPS}_{\lambda,\gamma,\nu}$, given by:
\begin{align}
    \mathcal{L}^{\CPS}_{\lambda,\gamma,\nu}=\lambda\mathcal{L}^{I}+\gamma\mathcal{L}^{H}+\nu\mathcal{L}^{S}.
    \label{Définitions des modèles: générateur du CPS}
\end{align}

Richardson's model with stirring and the contact process with stirring are the two models on which we obtain new results. Note that, by a rescaling of time, we can restrict ourselves to the study of the $\RMS(\lambda,1)$ and the $\CPS(\lambda,1,\nu)$. Note also that the existence of these processes comes from Theorem B3 of Liggett \cite{LiggettSIS}. They are all Feller Markov processes, therefore they verify the strong Markov property.




\section{Results} \label{Section Résultats}

\subsection{Asymptotic shape theorem}
We will see in Subsection \ref{Section Construction graphique} that we can define our models with a graphical construction. As a direct consequence of this construction, we will obtain the following properties:
\begin{lemma} \label{Définition du modèle: le processus est un growth model}
Let $(\xi_t)_{t\ge 0}$ be a $\RMS$ or a $\CPS$.
\begin{enumerate}
    \item The law of the process is invariant by the translation $T_x$, for all $x\in \Z^d$. 
    \item The process is \emph{additive}, that is: there exists a coupling such that, for all subsets $A\subset B$ of $\Z^d$, the processes $(\xi_t^{A})_{t\ge 0}$ and $(\xi_t^{B})_{t\ge 0}$ verify, for all $t\geq 0$,
$$\xi_t^{A\cup B}= \xi_t^{A}\cup \xi_t^{B}.$$ 
    \item The empty set is an absorbing state, that is: if $\xi_{t_0}=\emptyset$, then for all $t>t_0$, we have $\xi_{t}=\emptyset$.
\end{enumerate}
\end{lemma}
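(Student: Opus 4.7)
The plan is to deduce all three properties directly from the graphical construction announced in Subsection~\ref{Section Construction graphique}. That construction relies on three independent families of Poisson point processes on $\R_+$: infection arrows of rate $\lambda$ indexed by oriented edges $(x,y)\in\E^d$, healing marks of rate $\gamma$ indexed by sites $x\in\Z^d$ (present only in the CPS case), and stirring marks of rate $\nu$ indexed by unoriented edges $\{x,y\}\in\E^d$. A useful simplification is that, thanks to the rewriting \eqref{Générateur du mélange} of $\mathcal{L}^S$, I can implement the stirring dynamics by systematically swapping the states of the two endpoints at every stirring mark, regardless of their current values, since such a swap has no effect whenever $\eta(x)=\eta(y)$.

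For the first item, I would observe that each of the above Poisson families is stationary in law under any spatial translation $T_x$ of its index set, and that the measurable map sending (initial condition, Poisson marks) to the trajectory commutes with $T_x$. Applying $T_x$ to both the initial condition and the marks therefore yields a process distributed as the original one started from $T_x(A)$, which gives the translation invariance of the law.

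The second item is the only step requiring real care, and I would handle it by coupling $(\xi_t^A)_{t\ge 0}$, $(\xi_t^B)_{t\ge 0}$ and $(\xi_t^{A\cup B})_{t\ge 0}$ through the same realisation of all three Poisson families, then checking mark by mark that $\xi_t^A\cup \xi_t^B$ undergoes the same local update as $\xi_t^{A\cup B}$. Infections and healings are handled exactly as in the classical contact process: an infection arrow from $x$ to $y$ activates in the union iff it activates in $A$ or in $B$, and a healing mark at $x$ kills the union iff it kills it in both. The new ingredient is the stirring mark on $\{x,y\}$: with the ``always swap'' convention, the identity
$$(\eta\cup \eta')_{x,y}=\eta_{x,y}\cup \eta'_{x,y}$$
holds, since the swap is a permutation of the two coordinates $x,y$ and the union acts coordinatewise. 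A standard induction over successive marks (performed on a finite space-time window sufficient to determine the configuration in any prescribed region by locality) then yields $\xi_t^{A\cup B}=\xi_t^A\cup \xi_t^B$ for all $t\ge 0$.

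The third item is immediate from the graphical construction: starting from $\xi_{t_0}=\emptyset$, every infection arrow after $t_0$ has a healthy source and produces no new infection, every healing mark acts on a healthy site and has no effect, and every stirring mark swaps two zeros, so the configuration remains empty for all $t>t_0$. The only genuinely non-trivial point of the whole lemma is the stirring contribution to additivity, which boils down to the swap--union identity above once the ``always swap'' implementation of $\mathcal{L}^S$ has been adopted; I expect no other obstacle.
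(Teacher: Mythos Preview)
Your proposal is correct and matches the paper's approach: the paper also deduces all three items from the graphical construction and, for additivity, explicitly replaces the state-dependent stirring rule by the unconditional ``always swap'' convention (equivalently, letting open paths always traverse stirring edges), justifying this via the generator identity~\eqref{Générateur du mélange}. The only cosmetic difference is that the paper phrases additivity in the open-path language---$y\in\xi_t^{A\cup B}$ iff an open path reaches $(y,t)$ from $A\cup B$, hence from $A$ or from $B$---rather than your mark-by-mark update argument, but the content is the same.
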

We are interested in the asymptotic behavior of the set of once infected sites: Figure~\ref{fig:Simulations du RMS et CPS} shows some simulations of the RMS and the CPS. We define, for all $x,y\in \Z^d$, the \emph{lifetime of the process} $(\xi_t^x)_{t\ge 0}$:
$$\tau^x=\inf\{t\ge 0 ~:~ \xi_t^x=\emptyset\},$$
and the \emph{hitting time of the site} $y$ by the process $(\xi_t^x)_{t\ge 0}$:
$$t^x(y)=\inf\{t\ge 0 ~:~ y\in\xi_t^x\}.$$
When $x=0$, we simply write $\tau$ and $t(y)$. When $\P(\tau=+\infty)>0$, we say that the process \emph{survives with positive probability}. Durrett \cite{Durrett1980} proved in 1980 a one-dimensional version of an asymptotic shape theorem for this set, in the supercritical contact process. His result also holds for a larger class of models, which he calls growth models. By Lemma \ref{Définition du modèle: le processus est un growth model}, both the RMS and the CPS are growth models in the sense of Durrett. Therefore, in dimension one we directly obtain the following asymptotic shape theorems, in the entire supercritical region:

\begin{theorem} \label{Théorème TFA d=1}
Suppose that $d=1$. We set $r_t=\sup\{x\in \Z:x\in \xi_t\}$ and $l_t=\inf\{x\in \Z:x\in \xi_t\}$.
\begin{enumerate}
    \item Let $\lambda>0$ and $(\xi_t)_{t\ge 0}$ be a $\RMS(\lambda,1)$. There exists $\alpha>0$ such that:
    $$\lim_{t\to +\infty} \frac{r_t}{t}=-\lim_{t\to +\infty} \frac{l_t}{t}=\alpha\quad\text{a.s.}$$
    \item Let $\nu>0$, $\lambda>\lambda_c^{\CPS}(1,\nu)$ and $(\xi_t)_{t\ge 0}$ be a $\CPS(\lambda,1,\nu)$. There exists $\alpha>0$ such that:
    $$\lim_{t\to +\infty} \frac{r_t}{t}=-\lim_{t\to +\infty} \frac{l_t}{t}=\alpha\quad\text{a.s. on }\{\tau=+\infty\}.$$
\end{enumerate}
\end{theorem}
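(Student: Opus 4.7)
The strategy is to identify both the $\RMS(\lambda,1)$ and the $\CPS(\lambda,1,\nu)$ as one-dimensional growth models in the sense of Durrett \cite{Durrett1980} and then to invoke his asymptotic shape theorem directly, as the remark preceding the statement already hints. The three structural hypotheses of Durrett's framework --- translation invariance of the law, additivity of the coupling, and $\emptyset$ being absorbing --- are precisely the three items of Lemma \ref{Définition du modèle: le processus est un growth model}, all of which will fall out of the graphical construction of Subsection \ref{Section Construction graphique}. The proof therefore reduces to checking these hypotheses and then appealing to \cite{Durrett1980}.

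For the reader's convenience I would briefly recall the two ingredients on which the cited theorem rests. Applied to the process $(\xi_t^-)_{t\ge 0}$ started from the filled half-line $(-\infty,0]\cap\Z$, additivity combined with the Markov property makes the right edge $r_t^-:=\sup\xi_t^-$ a subadditive cocycle, so Kingman's subadditive ergodic theorem yields a deterministic $\alpha:=\lim_{t\to\infty} r_t^-/t$ almost surely. The upper bound $r_t\le r_t^-$ coming from the additive coupling with $\xi_t^-$ gives $\limsup r_t/t\le \alpha$ a.s., while on $\{\tau=+\infty\}$ a restart argument produces space-time points from which the offspring of $\{0\}$ invades a half-line, furnishing the matching bound $\liminf r_t/t\ge \alpha$. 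The corresponding statement for $l_t$ then follows by the reflection symmetry of the graphical representation, which is preserved by stirring because the exchange rates are symmetric in the two endpoints of an edge.

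The point I expect to be the main obstacle is the positivity $\alpha>0$, since the subadditive convergence by itself only guarantees $\alpha\in[0,+\infty)$. For the $\RMS(\lambda,1)$ this is essentially automatic because infection is irreversible: the rightmost infected site advances at rate at least $\lambda$ through an infection arrow, and a crude stochastic comparison with a nearest-neighbor random walk of strictly positive drift gives $\alpha\ge c(\lambda)>0$ for every $\lambda>0$. For the $\CPS(\lambda,1,\nu)$ positivity is the substantive part of the statement: the block/percolation argument on the graphical representation that underlies Durrett's theorem converts positive survival probability into strictly positive linear growth of $r_t^-$, which is exactly what the supercriticality hypothesis $\lambda>\lambda_c^{\CPS}(1,\nu)$ supplies.
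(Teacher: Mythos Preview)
Your approach is essentially the same as the paper's: both verify that the $\RMS$ and $\CPS$ are growth models in the sense of Durrett \cite{Durrett1980} via the graphical construction (Lemma \ref{Définition du modèle: le processus est un growth model}) and then invoke his one-dimensional asymptotic shape theorem directly. One minor omission: Durrett's hypotheses also include a finite-range (nearest-neighbour) condition, which is not among the three items of Lemma \ref{Définition du modèle: le processus est un growth model} but is of course immediate from the form of the generators.
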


\begin{figure}[H]
\captionsetup[subfigure]{labelformat=empty}
    \centering
    \begin{subfigure}{0.45\textwidth}
        \centering
        \includegraphics[width=4.5cm,height=4.5cm]{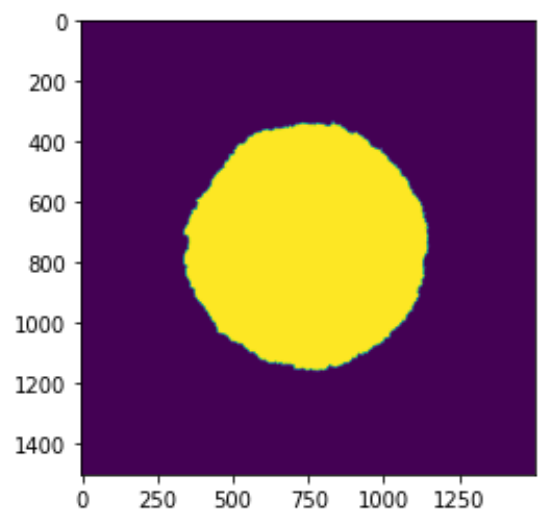}
        \caption{$\RMS(1,1)$.}
    \end{subfigure}
    \hspace{0.5cm}
    \begin{subfigure}{0.45\textwidth}
        \centering
        \includegraphics[width=4.5cm,height=4.5cm]{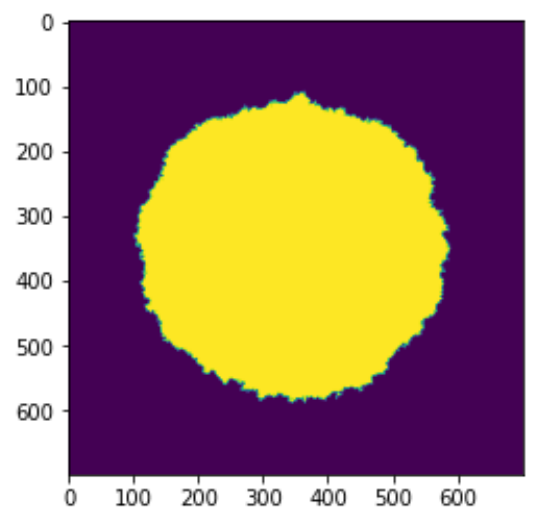}
        \caption{$\RMS(0.2,1)$.}
    \end{subfigure}
    
    \vspace{0.5cm}
    
    \begin{subfigure}{0.45\textwidth}
        \centering
        \includegraphics[width=4.5cm,height=4.5cm]{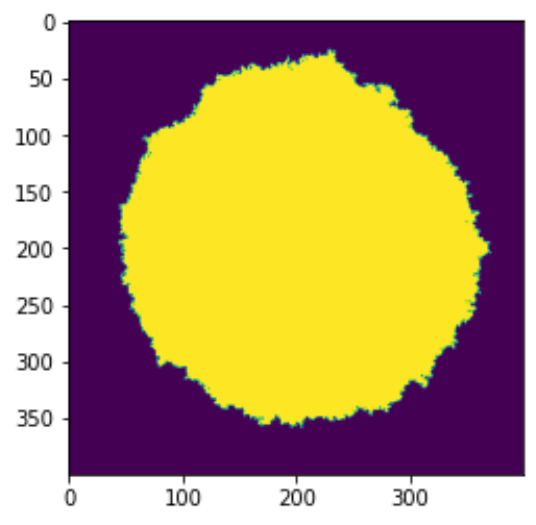}
        \caption{$\CPS(2,1,10)$.}
    \end{subfigure}
    \hspace{0.5cm}
    \begin{subfigure}{0.45\textwidth}
        \centering
        \includegraphics[width=4.5cm,height=4.5cm]{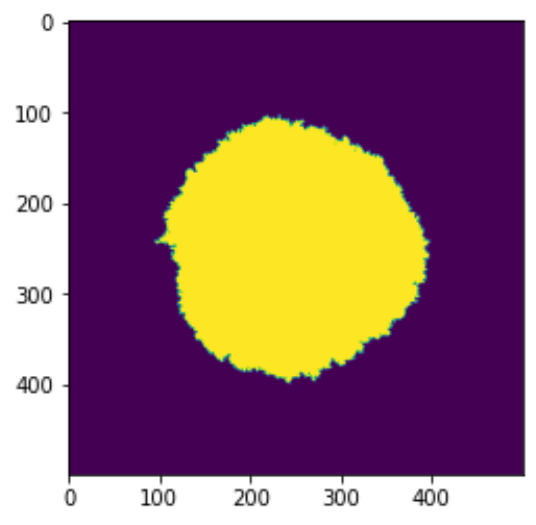}
        \caption{$\CPS(2,1,1)$.}
    \end{subfigure}
\caption{Simulations of the set of once infected sites for some Richardson's models with stirring and contact processes with stirring, after 1 000 000 interactions (infections/healings/exchanges).}
\label{fig:Simulations du RMS et CPS}
\end{figure}

In higher dimensions, a shape theorem is harder to obtain. We want to apply Deshayes and Siest's \cite{DeshayesSiest2024} result: they proved an asymptotic shape theorem for a class of growth models with linear growth properties. They defined two classes of Markov processes: the first one, the class $\mathcal{C}$, is a class of what they called growth models, and is similar to the class of growth models Durrett \cite{Durrett1980} defined. By Lemma \ref{Définition du modèle: le processus est un growth model}, both the RMS and the CPS are in class $\mathcal{C}$. The second class, the class $\mathcal{C}_L$, is a class of models which have linear growth properties.
In our context of invariance by spatial translations of the law of the RMS and the CPS, these two models are in class $\mathcal{C}_L$ if there exist $C_1,C_2,M_1,M_2>0$ such that for all $t>0$ and $x\in\Z^d$,
\begin{align}
\P(\tau=+\infty)&>0, \label{Le modèle est permanent} \\
\P\left(\exists y\in\Z^d: t(y)\leq t\text{ and }\|y\|\ge M_1 t\right)&\leq C_1\exp(-C_2t), \label{AML} \\
\P\left(t<\tau<+\infty\right)&\leq C_1\exp(-C_2 t),\label{SC} \\
\P\left(t(x)\geq M_2\|x\|+t,\tau=+\infty\right)&\leq C_1\exp(-C_2t),\label{ALL} 
 \end{align}
where $||x||=\sum_{i=1}^n |x_i|$. Property \eqref{Le modèle est permanent} means that the process survives with positive probability, when it starts from the initial configuration $\xi_0=\{0\}$. Property \eqref{AML} (resp. \eqref{ALL}) corresponds to at most linear growth (resp. at least linear growth) of the set of once infected sites. Property \eqref{SC} is a "small cluster" property, by analogy with percolation vocabulary: if the epidemic dies out, then it dies out quickly. Therefore, the epidemic starting from a singleton in the graphical construction is either a small finite connected component, or an infinite connected component, with high probability. 

Proving these inequalities is the hardest part to prove the asymptotic shape theorem: we are only able to show them for large enough infection rates. We will do a coupling between the RMS, a RM and a CP that is restarted when it dies out (we denote it by $\overline{\CP}$) in such a way that 
$$\overline{\CP}\subset \RMS \subset \RM,$$
seeing the models as the set of their infected sites, in order to prove \eqref{Le modèle est permanent}, \eqref{AML}, \eqref{SC} and \eqref{ALL} for the RMS. We do the same coupling to prove these inequalities for the CPS (only the parameters of the CP and the RM change). These couplings will be given in Lemma \ref{Lemme_sur_le_couplage}. It gives sufficient information only if the CP associated to the $\overline{\CP}$ is supercritical, which is the case only for infection rates $\lambda$ sufficiently large. In this case, we can apply Theorem $1$ of Deshayes and Siest \cite{DeshayesSiest2024} to obtain an asymptotic shape theorem. This approach is similar to Durrett and Griffeath's approach for the contact process \cite{DurrettGriffeath1982}:  they did a coupling between a CP in dimension $d$ and a CP in dimension $1$ to prove the linear growth of the former, using linear growth properties they proved for the supercritical CP in dimension $1$ in \cite{DurrettGriffeath1983}. With this coupling, they obtained an asymptotic shape theorem for the contact process in all dimensions, but only for infection rates $\lambda>\lambda_c^{\CP}(1)$. The extension to the whole supercritical region was done by Bezuidenhout and Grimmett \cite{BezuidenhoutGrimmett1990} in 1990. 

For a norm $\mu$ on $\R^d$, we denote by $B_{\mu}$ the unit ball for this norm. For all $t\ge 0$, we set
$$H_t=\{x\in \Z^d:t(x)\le t\}+[0,1)^d=\{x\in \Z^d:\exists s<t,~x\in \xi_s\}+[0,1)^d.$$
We obtain the following asymptotic shape theorems in dimension $d\ge 2$:

\begin{theorem} \label{Théorème TFA d>1}
Suppose that $d\ge 2$.
\begin{enumerate}
    \item Let $\lambda>2d\lambda_c^{\CP}(d)$ and $(\xi_t)_{t\ge 0}$ be a $\RMS(\lambda,1)$. There exists a norm $\mu$ on $\R^d$ such that, for all $\eps>0$,
    $$\P(\exists T\in \R^+,~ \forall t\geq T ~:~ (1-\eps)tB_{\mu} \subset H_t \subset (1+\eps)tB_{\mu})=1.$$ 
    \item Let $\nu>0$, $\lambda>(2d\nu+1)\lambda_c^{\CP}(d)$ and $(\xi_t)_{t\ge 0}$ be a $\CPS(\lambda,1,\nu)$. There exists a norm $\mu$ on $\R^d$ such that, for all $\eps>0$,
    $$\P(\exists T\in \R^+,~ \forall t\geq T ~:~ (1-\eps)tB_{\mu} \subset H_t \subset (1+\eps)tB_{\mu}~|~\tau=+\infty)=1.$$ 
\end{enumerate}
\end{theorem}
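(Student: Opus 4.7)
The plan is to verify the four linear growth conditions \eqref{Le modèle est permanent}--\eqref{ALL} of Deshayes and Siest, since Lemma~\ref{Définition du modèle: le processus est un growth model} already places the $\RMS$ and the $\CPS$ in their class~$\mathcal{C}$, and then to invoke Theorem~$1$ of~\cite{DeshayesSiest2024}. The strategy, announced just above the statement, is to sandwich our process $\xi$ (either an $\RMS(\lambda,1)$ or a $\CPS(\lambda,1,\nu)$) between two processes built from the same graphical representation:
\[
\overline{\CP} \;\subset\; \xi \;\subset\; \RM,
\]
where $\RM$ is a pure Richardson's model with infection rate $\lambda$, and $\overline{\CP}$ is a contact process restarted every time it dies out. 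The healing rate of this auxiliary $\CP$ has to absorb the maximal ``loss rate'' seen by an infected site in $\xi$. For the $\RMS$, an infected site can only lose its state through a stirring exchange with a healthy neighbor, which happens at total rate at most $2d$; we thus take $\overline{\CP}=\CP(\lambda,2d)$, which after rescaling time by $1/(2d)$ is supercritical iff $\lambda>2d\lambda_c^{\CP}(d)$. For the $\CPS$, one adds the genuine healing rate $1$ and obtains $\overline{\CP}=\CP(\lambda,1+2d\nu)$, supercritical iff $\lambda>(1+2d\nu)\lambda_c^{\CP}(d)$. These are exactly the thresholds appearing in the hypotheses of Theorem~\ref{Théorème TFA d>1}.

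Once the coupling is built (this will be Lemma~\ref{Lemme_sur_le_couplage}), the four inequalities are deduced as follows. Property \eqref{Le modèle est permanent} is immediate since $\overline{\CP}\subset\xi$ and a supercritical $\CP$ survives with positive probability from a single infected site. The upper bound $\xi\subset\RM$ yields the at most linear growth \eqref{AML}: the set of once infected sites in $\RM(\lambda)$ is dominated by the genealogy of a rate-$2d\lambda$ branching random walk, for which standard exponential large deviations on the radius are well known. Finally, the ``small cluster'' bound \eqref{SC} and the at least linear growth bound \eqref{ALL} are known for the supercritical contact process on $\Z^d$ since Bezuidenhout and Grimmett~\cite{BezuidenhoutGrimmett1990}; via the domination $\overline{\CP}\subset\xi$ together with the restart mechanism that defines $\overline{\CP}$, they transfer to $\xi$ on the survival event $\{\tau=+\infty\}$.

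Given these four inequalities, Theorem~$1$ of~\cite{DeshayesSiest2024} produces a norm $\mu$ on $\R^d$ and the desired almost sure shape convergence, conditional on $\{\tau=+\infty\}$. For the $\RMS$ there is no healing, so $\tau=+\infty$ almost surely and the statement is unconditional; for the $\CPS$ the statement is exactly what is claimed. Both parts of Theorem~\ref{Théorème TFA d>1} then follow at once.

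The main obstacle is the construction of the coupling, and in particular the lower bound $\overline{\CP}\subset\xi$. On the graphical representation, a stirring mark on an edge $\{x,y\}$ swaps the two endpoints: it is invisible to $\xi$ when they share the same state, but acts like a simultaneous healing and infection when exactly one of them is infected. One must therefore read these marks simultaneously as stirring events for $\xi$ and as pure healing crosses for $\overline{\CP}$, in such a way that $\overline{\CP}$ never holds an infection where $\xi$ does not. Once this bookkeeping is set up carefully, the restart device is routine: at every extinction time of $\overline{\CP}$ one launches a fresh, independent contact process inside the current configuration of $\xi$, and geometric decay over independent restarts supplies the quantitative exponential bounds needed to feed \eqref{SC} and \eqref{ALL}.
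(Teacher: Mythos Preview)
Your approach is essentially the paper's: build the sandwich coupling (this is Lemma~\ref{Lemme_sur_le_couplage}), run the restart procedure for the lower contact process exactly as in Garet--Marchand~\cite{GaretMarchand2012}, read off the exponential controls \eqref{AML}--\eqref{ALL} from the known estimates for the supercritical $\CP$ and Richardson's model, and then invoke Theorem~1 of~\cite{DeshayesSiest2024}.

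There is one genuine slip in your sandwich. The upper bound $\xi\subset\RM(\lambda)$ is false: a stirring arrow can transport an infection to a site that no rate-$\lambda$ infection arrow has touched, so the once-infected set of the $\RMS$ (or $\CPS$) is \emph{not} contained in that of a Richardson model built from the infection arrows alone. The paper handles this by reinterpreting every stirring arrow as an additional infection arrow in the upper process, which gives an $\RM(\lambda+1)$ for the $\RMS(\lambda,1)$ and an $\RM(\lambda+\nu)$ for the $\CPS(\lambda,1,\nu)$. This correction is cosmetic for the conclusion---any finite-rate Richardson model has the exponential tail needed for \eqref{AML}---but the coupling as you wrote it does not hold.
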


In their article \cite{DurrettGriffeath1982}, Durrett and Griffeath mentioned that their asymptotic shape theorem can be extended to a class of linear growth models they defined. Note that if we prove \eqref{Le modèle est permanent}, \eqref{AML}, \eqref{SC} and \eqref{ALL} for the RMS or the CPS, then in addition to Lemma \ref{Définition du modèle: le processus est un growth model}, both the RMS and the CPS are linear growth models in the sense of Durrett and Griffeath, and so the asymptotic shape theorem can also be proved using their result \cite{DurrettGriffeath1982}. 

Our final growth result is about the asymptotic behavior of the number of infected sites at time $t$ in the RMS. We wanted to use this result to prove the at least linear growth of the model for all infection rates $\lambda>0$, but we did not manage to. We discuss this result in Section \ref{Section discussion}.

\begin{proposition} \label{Théorème Proposition sur densité de sites infectés}
    Let $\lambda>0$ and $(\xi_t)_{t\ge 0}$ be a $\RMS(\lambda,1)$. There exist constants $A=A_{d}$ and $B=B_{C,\lambda,d}>0$ such that for all $t$ large enough:
    $$\P[\Card(\xi_t)\ge B t^d]\ge 1-\frac{A}{\lambda t}.$$
\end{proposition}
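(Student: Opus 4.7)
The plan rests on a simple monotonicity observation: in the $\RMS$, a stirring interaction exchanges the states of two neighbours and hence leaves $\Card(\xi_t)$ unchanged, while each infection adds exactly one infected site, and there is no healing. Consequently $N_t := \Card(\xi_t)$ is an almost surely non-decreasing $\N$-valued jump process whose only jumps have size $+1$ and occur at instantaneous rate $\lambda\, e(\xi_t)$, where $e(\xi_t)$ denotes the number of edges of $\Z^d$ with exactly one endpoint in $\xi_t$. The second ingredient is the edge-isoperimetric inequality on $\Z^d$: there exists a constant $C = C_d > 0$ such that $e(S) \geq C\, \Card(S)^{(d-1)/d}$ for every nonempty finite $S \subset \Z^d$. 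Put together, this shows that $(N_t)_{t \geq 0}$ stochastically dominates the pure birth chain $(M_t)_{t \geq 0}$ on $\N$ started at $M_0 = 1$ with transition rate $\lambda C k^{(d-1)/d}$ from state $k$ to state $k+1$ (same exponential clocks, extra jumps for $N$).

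I would then analyse the hitting times $T_n := \inf\{t \geq 0 : M_t \geq n\}$. Writing $T_n = \sum_{k=1}^{n-1} E_k$ where the $E_k$ are independent exponentials of parameter $\lambda C k^{(d-1)/d}$, elementary calculations yield
\[
\E[T_n] = \frac{1}{\lambda C} \sum_{k=1}^{n-1} k^{-(d-1)/d} \sim \frac{d}{\lambda C}\, n^{1/d}, \qquad \Var[T_n] = \frac{1}{(\lambda C)^2} \sum_{k=1}^{n-1} k^{-2(d-1)/d}.
\]
Choose $B = B_{C,\lambda,d}$, essentially proportional to $(\lambda C/(2d))^d$, so that $n := \lfloor B t^d \rfloor$ satisfies $\E[T_n] \leq t/2$ for all $t$ large enough. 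The stochastic domination combined with Chebyshev's inequality then gives
\[
\P\bigl(\Card(\xi_t) < B t^d\bigr) \leq \P(T_n > t) \leq \P\bigl(|T_n - \E T_n| > t/2\bigr) \leq \frac{4\, \Var[T_n]}{t^2}.
\]

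The final task is to check that this right-hand side is $O(1/(\lambda t))$ in every dimension. With $n$ of order $(\lambda t)^d$: for $d = 1$ the exponent $2(d-1)/d$ vanishes so $\Var[T_n] \sim n/(\lambda C)^2$ and $\Var[T_n]/t^2$ is of order $1/(\lambda t)$ exactly; for $d = 2$ the sum grows like $\log n$, giving $O(\log(\lambda t)/(\lambda t)^2)$; and for $d \geq 3$ the series $\sum k^{-2(d-1)/d}$ converges and one obtains $O(1/(\lambda t)^2)$. In each case the bound fits, for $t$ large enough, into the required form $A_d/(\lambda t)$. The only (modest) obstacle is this dimension-dependent bookkeeping: the one-dimensional case is the tightest and matches the target exactly at the limit of what Chebyshev can deliver, so no constant factor can be wasted there; in higher dimensions there is plenty of room to spare. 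Everything else is a direct combination of edge-isoperimetry on $\Z^d$ with a stochastic comparison to a pure birth chain on $\N$.
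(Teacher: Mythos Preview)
Your proof is correct and follows essentially the same route as the paper: an isoperimetric lower bound on the infection rate, stochastic comparison with a pure birth chain of rates $\lambda C\, k^{(d-1)/d}$, and a Chebyshev estimate. The only minor variation is that the paper applies Chebyshev to the martingale $F(Y_t)-C\lambda t$ (with $F(k)=\sum_{i<k} i^{1/d-1}$), obtaining the variance bound $C\lambda t$ uniformly in $d$, whereas you work directly with the hitting times $T_n=\sum_{k<n}E_k$ and handle the cases $d=1$, $d=2$, $d\ge 3$ separately; both lead to the same $A_d/(\lambda t)$ conclusion.
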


\subsection{Fixation of the sites in the RMS}

Among the models we study here, the RMS has a particularity: there is no direct healing as in the contact processes, but an infected site can heal if it exchanges its state with a healthy neighbor. It is a big difference since an infected site has to have healthy neighbors to heal, whereas in the contact processes a site can always heal, no matter the neighboring. Therefore, the question of fixation of sites naturally arises:  does each site stay infected forever after a sufficiently long time, like in Richardson's model? We prove a result for all infection rates in dimension $1$ and $2$, and for $\lambda$ large enough in higher dimensions. In high dimensions with low infection rates, we have a weaker result.

We say that a site $x\in \Z^d$ \emph{fixes on the infected state} (resp. \emph{fixes on the healthy state}) if:
\begin{equation*}
     \P(\exists T>0,~\forall t\geq T,~ x\in \xi_t)=1 
\end{equation*}
\begin{equation*}
    \text{(resp. }\P(\exists T>0,~ \forall t\geq T,~ x\notin \xi_t)=1). 
\end{equation*}

We prove the following result:

\begin{theorem} \label{Théorème Fixation RMS}
Let $\lambda>0$ and $(\xi_t)_{t\ge 0}$ be $\RMS(\lambda,1)$ starting from a non-empty initial configuration.
\begin{enumerate}
    \item All sites fix on the infected state or all sites fix on the healthy state.
    \item Suppose that $d\in \{1,2\}$. Then all sites fix on the infected state.
    \item Suppose that $d\ge 3$ and $\lambda>2d\lambda_c(d)$. Then all sites fix on the infected state.
\end{enumerate}
\end{theorem}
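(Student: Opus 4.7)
\emph{Proof plan.} The argument rests on the graphical construction (Section~\ref{Section Construction graphique}) and will be carried out for the three parts in turn.

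\emph{Part (1).} The key ingredient is a local \emph{propagation of fixation} principle: if a site $x$ fixes on the infected state, then so does every neighbor $y$ of $x$. On the event $E_x$ that $x$ fixes on the infected state from some (random) time $T_x$ onwards, every atom of the rate-$1$ stirring Poisson process $\mathcal{N}_{\{x,y\}}$ lying in $[T_x,+\infty)$ must occur while $y$ is infected, since otherwise the swap would turn $x$ healthy. Isolating this stirring clock and using its independence from the rest of the graphical construction, one checks — for instance through the compensator of the associated counting process — that
\[
\int_{T_x}^{+\infty}\mathds{1}_{\xi_t(y)=0}\,dt=0 \quad \text{a.s.\ on }E_x.
\]
Since $t\mapsto \xi_t(y)$ is càdlàg and constant between the (locally finite) Poisson times at $y$, this integral is a sum of lengths of disjoint intervals on which $\xi_\cdot(y)=0$, so such intervals are absent past $T_x$, and $y$ also fixes on the infected state. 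Iterating along a path and using the connectivity of $\Z^d$, fixation of one site on the infected state forces fixation of every site on the infected state. A symmetric argument — exchanging the roles of the stirring clock on $\{x,y\}$ with the rate-$\lambda$ infection clock on the oriented edge $(y,x)$ — shows that fixation on the healthy state propagates similarly. It remains to rule out the "perpetual oscillation" scenario in which no site fixes at all; this is handled by a tail $0$-$1$ argument exploiting the additive structure of the RMS together with the monotonicity and unbounded growth of $|\xi_t|$.

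\emph{Part (2).} Thanks to (1) it suffices to rule out the "all-healthy" alternative when $d\in\{1,2\}$. I would use Proposition~\ref{Théorème Proposition sur densité de sites infectés}: at large $t$ the cluster contains $\Omega(t^d)$ infected sites with high probability. In a low-dimensional lattice, this many particles cannot simultaneously escape to infinity — the symmetric exclusion process underlying the stirring dynamics is recurrent — so the density of infected sites around any fixed point cannot go to zero, which is incompatible with all-healthy fixation. The dichotomy of (1) then forces all sites to fix on the infected state.

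\emph{Part (3).} Here I would exploit the coupling $\overline{\mathrm{CP}}\subset \RMS$ of Lemma~\ref{Lemme_sur_le_couplage}. The hypothesis $\lambda>2d\lambda_c(d)$ places us in the regime used for Theorem~\ref{Théorème TFA d>1}, where the dominated restarted contact process is supercritical, so its marginals at large times have density bounded below by the (positive) survival probability of the CP. Dominating from below yields the same lower bound for the RMS, which again rules out the all-healthy alternative of (1), so all sites fix on the infected state.

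\emph{Main obstacle.} The delicate step is the dichotomy in Part (1), in particular ruling out the perpetual oscillation scenario without a direct monotonicity at the single-site level. The neighbor-propagation of fixation is a relatively clean consequence of the Palm/compensator formalism applied to the independent Poisson clocks of the graphical construction; what is substantially more subtle is the global step, namely showing that almost surely at least one of the two fixation events occurs. This requires combining the local identity above with the growth estimates of the previous subsection and a careful tail-invariance analysis.
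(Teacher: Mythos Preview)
Your neighbor-propagation step in Part~(1) is sound and matches the paper's closing observation, but the heart of Part~(1)---ruling out perpetual oscillation at a single site---is left as a gap that you yourself flag. Invoking a ``tail $0$--$1$ argument exploiting the additive structure together with the monotonicity and unbounded growth of $|\xi_t|$'' is not a proof: tail triviality would at best tell you that the event ``all sites fix infected'' has probability $0$ or $1$, and the monotone growth of $\Card(\xi_t)$ says nothing about the state of a fixed site. The paper resolves this by an entirely different and concrete device. It attaches a \emph{particle} to every site and lets it move with the stirring, but only along swaps between sites in different states (so a particle carries its state). Two facts then settle fixation directly: (i) the probability that particle $a$ ever reaches a fixed site $x$ while still healthy is at most $(1+\lambda)^{-|a|_1}$, since each step closer to $x$ is a swap with an infected neighbor that could instead have been an infection with probability $\lambda/(1+\lambda)$; by Borel--Cantelli only finitely many healthy particles ever reach $x$; (ii) by the same competition, every healthy particle moves only finitely many times almost surely. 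Hence after a finite random time the site $x$ is either permanently occupied by a single parked healthy particle or only ever by infected ones---this is precisely fixation, obtained without any tail-$\sigma$-algebra argument. Your propagation principle then finishes Part~(1) exactly as in the paper.

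For Parts~(2) and~(3) your outlines are in a related spirit but less direct. For $d\in\{1,2\}$ the paper does not use Proposition~\ref{Théorème Proposition sur densité de sites infectés} or exclusion-process recurrence at all: it follows a single initially-infected particle along \emph{all} stirring arrows regardless of states, obtaining a rate-$2d$ simple random walk that remains on an infected site; recurrence in $d\le 2$ then gives that every site is infected infinitely often. For $d\ge 3$ with $\lambda>2d\lambda_c(d)$, rather than extracting a density lower bound from the $\overline{\CP}$ coupling, the paper restarts the RMS at an arbitrary time $t_0$ from one infected site and applies Theorem~\ref{Théorème TFA d>1} to the restarted process, concluding that $\{t:y\in\xi_t\}$ is unbounded for every $y$. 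In both cases this, combined with the fixation dichotomy of Part~(1), forces infected fixation. Your density-based routes may be salvageable, but they require substantially more work than the paper's one-line random-walk and shape-theorem arguments, and they still presuppose a proof of Part~(1) that you have not supplied.
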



\subsection{Weak survival and strong survival on a homogeneous tree}

Here we study two different notions of survival for the contact process starting from the initial configuration $\{0\}$. The first one, which we call \emph{weak survival}, is what we call "to survive with positive probability" in the other sections: the process $(\xi_t)_{t\ge 0}$ survives \emph{weakly} if: 
$$\P(\forall t\ge 0,~\xi_t\ne \emptyset)>0.$$
We also define a stronger version, which imposes that the set of infected sites does not "go away" from any finite box. We say that the process survives \emph{strongly} if: 
$$\P(\forall T\ge 0,~ \exists t>T,~ 0\in \xi_t)>0.$$
Since the configuration $\emptyset$ is absorbing, strong survival implies weak survival. 

On $\Z^d$, when they survive weakly, both RM and CP also survive strongly: there is only one phase transition. It is obvious for RM, but for CP it is not easy to prove: it uses Bezuidenhout and Grimmett's block construction \cite{BezuidenhoutGrimmett1990}, a proof can be found in Liggett's book \cite{LiggettSIS}. On an infinite homogeneous tree $T_d$, that is, an infinite tree having with all vertices having degree $d+1$, Pemantle \cite{Pemantle1992} proved that the contact process exhibits two phase transitions: there exist infection rates $\lambda$ such that $\CP(\lambda,1)$ survives weakly, but not strongly. Note that $\RMS(\lambda,1)$ survives weakly for all $\lambda>0$, since $(\Card(\xi_t))_{t\ge 0}$ is non-decreasing for this model, but there is no reason that it survives strongly for all $\lambda>0$. We prove that there are also two phase transitions for the RMS and for the CPS, for a set of couples $(d,\nu)$ of dimension and stirring rate, on $T_d$:

\begin{theorem} \label{Théorème Survie forte et faible sur Td}
\begin{enumerate}
    \item Let $\lambda>0$ and $(\xi_t)_{t\ge 0}$ be $\RMS(\lambda,1)$ on the homogeneous tree $T_d$. There exists $\lambda_s^{\RMS}(d)>0$ such that:
    \begin{itemize}
        \item if $\lambda>\lambda_s^{\RMS}(d)$, then the $\RMS(\lambda,1)$ survives strongly,
        \item if $0<\lambda<\lambda_s^{\RMS}(d)$, then the $\RMS(\lambda,1)$ survives weakly but does not survive strongly.
    \end{itemize}
    Moreover, we have:
    \begin{align}
        \frac{d+1}{2\sqrt{d}}-1\le\lambda_s^{\RMS}(d)\le\frac{d+1}{\sqrt{d}-1}. \label{Bornes paramètre critique survie forte RMS}
    \end{align} 
    \item Let $\lambda,\nu>0$ and $(\xi_t)_{t\ge 0}$ be a $\CPS(\lambda,1,\nu)$. There exists $\lambda_s^{\CPS}(d,\nu)>0$ such that:
    \begin{itemize}
        \item if $\lambda>\lambda_s^{\CPS}(d,\nu)$, then the $\CPS(\lambda,1,\nu)$ survives strongly,
        \item if $0<\lambda<\lambda_s^{\CPS}(d,\nu)$, then the $\CPS(\lambda,1,\nu)$ does not survive strongly,
    \end{itemize}
    and we have 
    \begin{align}
        \nu\left(\frac{d+1}{2\sqrt{d}}-1\right)\le\lambda_s^{\CPS}(d,\nu)\le \frac{(d+1)\nu+1}{\sqrt{d}-1}.\label{Bornes paramètre critique survie forte CPS}
    \end{align}
    Moreover:
    \begin{itemize}
        \item For all $(d,\nu)$ in 
    $$W=\left\{(d,\nu)\in \llbracket 2,+\infty\llbracket \times (0,+\infty)~:~\nu\left(\frac{d+1}{2\sqrt{d}}-1\right)-\frac{(d+1)\nu+1}{d-1}> 0\right\},$$
    there exist infection rates $\lambda>0$ such that $\CPS(\lambda,1,\nu)$ survives weakly, but not strongly.
        \item The set $W$ is non-empty if and only if $d\ge 17$.
    \end{itemize}
    \end{enumerate}
\end{theorem}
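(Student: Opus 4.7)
The plan follows Liggett's strategy for the contact process on homogeneous trees~\cite{LiggettSIS}, adapted to the stirring dynamics. First, additivity (Lemma~\ref{Définition du modèle: le processus est un growth model}) gives that strong survival is monotone in $\lambda$ via the standard coupling, so that $\lambda_s^{\RMS}(d)$ and $\lambda_s^{\CPS}(d,\nu)$ are well-defined thresholds; the rest of the work is to prove the two-sided bounds \eqref{Bornes paramètre critique survie forte RMS} and \eqref{Bornes paramètre critique survie forte CPS}, and then to exhibit the separation from the weak-survival threshold on $W$.

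The lower bounds on $\lambda_s$ rely on the exponential-moment test function
\begin{equation*}
\phi_c(\eta) \;=\; \sum_{x\in T_d} c^{|x|}\,\eta(x),\qquad c\in(0,1),
\end{equation*}
where $|x|$ is the graph distance from the root of $T_d$. Computing $\mathcal{L}^{\RMS}_{\lambda,1}\phi_c$ site by site and bounding each interaction by the worst case in which all $d+1$ neighbors of each infected site are healthy (so that every potential infection and stirring event fires and contributes positively to $\phi_c$) gives
\begin{equation*}
\mathcal{L}^{\RMS}_{\lambda,1}\phi_c(\eta) \;\le\; \bigl[(\lambda+1)(dc+c^{-1})-(d+1)\bigr]\,\phi_c(\eta).
\end{equation*}
The minimum of $dc+c^{-1}$ over $c>0$ is $2\sqrt d$, attained at $c=1/\sqrt d\in(0,1)$ for $d\ge 2$, so the bracket is strictly negative exactly when $\lambda<\tfrac{d+1}{2\sqrt d}-1$. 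In that regime $\E[\phi_c(\xi_t^0)]\le e^{-\delta t}$ for some $\delta>0$ starting from $\xi_0^0=\{0\}$, hence $\P(0\in\xi_t^0)\le e^{-\delta t}$ because $c^0=1$; a standard argument via convergence to an invariant distribution of the additive system then rules out strong survival. The CPS lower bound either drops out from the same computation with an additional $-1$ from direct healing followed by a time rescaling, or is obtained immediately from the pathwise domination $\CPS(\lambda,1,\nu)\subseteq\RMS(\lambda,\nu)$ together with the RMS bound.

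For the upper bounds I stochastically dominate each stirred process from below by an ordinary contact process on $T_d$, by bundling the infection-losing effect of stirring into the healing rate. In the joint graphical construction, each stirring arrival on an edge $\{x,y\}$ (rate $\nu$) is replaced in the auxiliary process by two healing arrivals, one at each endpoint, giving a contact process of total healing rate $(d+1)\nu$ for RMS and $(d+1)\nu+1$ for CPS. A short induction on events preserves the invariant $\xi_t^{\CP}\subseteq\xi_t^{\RMS}$ (respectively $\subseteq\xi_t^{\CPS}$), because at a stirring time the auxiliary CP heals both endpoints while the stirred process merely swaps their states. Time-rescaling $\CP(\lambda,\gamma)\equiv\CP(\lambda/\gamma,1)$ and invoking Liggett's classical bound $\lambda_s^{\CP}(T_d)\le\tfrac{1}{\sqrt d-1}$~\cite{LiggettSIS} then yields the upper bounds in \eqref{Bornes paramètre critique survie forte RMS} and \eqref{Bornes paramètre critique survie forte CPS}. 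The same coupling together with Pemantle's upper bound $\lambda_w^{\CP}(T_d)\le\tfrac{1}{d-1}$~\cite{Pemantle1992} gives $\lambda_w^{\CPS}(d,\nu)\le\tfrac{(d+1)\nu+1}{d-1}$, since weak survival is monotone under pathwise domination.

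The last piece separates weak from strong survival on $W$: any $\lambda$ strictly between $\nu\bigl(\tfrac{d+1}{2\sqrt d}-1\bigr)$ and $\tfrac{(d+1)\nu+1}{d-1}$ realizes weak but not strong survival of $\CPS(\lambda,1,\nu)$, and the interval is non-empty exactly on $W$. Letting $\nu\to\infty$ makes the additive $+1$ negligible, so $W\ne\emptyset$ reduces to $\tfrac{d+1}{2\sqrt d}-1>\tfrac{d+1}{d-1}$, which simplifies to $d^2-4d^{3/2}-1>0$ and is elementarily equivalent to $d\ge 17$. The hardest step is expected to be the exponential-moment computation itself: the worst-case bound on the generator must correctly aggregate the infection, stirring and (for CPS) direct-healing contributions at each site, handling separately the root of $T_d$ (which has $d+1$ children instead of $d$ and contributes only a harmless constant), and the passage from the exponential decay of $\P(0\in\xi_t^0)$ to the almost-sure non-strong-survival statement requires some care, typically through the existence and ergodicity of the upper invariant distribution of the additive system.
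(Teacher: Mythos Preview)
Your overall strategy matches the paper's: a Liggett-type exponential weight yields the lower bound on $\lambda_s$, and a contact-process comparison (bundling stirring into healing) yields the upper bound; the reductions for the $\CPS$ and the analysis of $W$ are then routine and agree with the paper. Two points need repair.

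First, your justification of the generator inequality $\mathcal{L}^{\RMS}_{\lambda,1}\phi_c\le\bigl[(\lambda+1)(dc+c^{-1})-(d+1)\bigr]\phi_c$ is wrong as written: a stirring between an infected $x$ and a healthy child $y$ (with $|y|=|x|+1$) contributes $c^{|y|}-c^{|x|}<0$ to $\phi_c$, so it is \emph{not} true that ``every potential infection and stirring event \ldots\ contributes positively''. The inequality nevertheless holds, but by a different mechanism: the exact generator is a sum over boundary edges $(x,y)$ with $x$ infected and $y$ healthy of $(\lambda+1)c^{|y|}-c^{|x|}$, while your ``all neighbors healthy'' expression is the full sum over all edges incident to infected sites; their difference is a sum over internal edges $\{x,y\}\subset A$ of $\lambda(c^{|x|}+c^{|y|})>0$, which gives the bound. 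The paper sidesteps this by using the horocyclic level $l(x)$ (every vertex has exactly one neighbor at level $l(x)-1$ and $d$ at level $l(x)+1$, so there is no root correction) and then does an explicit connected-component computation that lands on the same threshold.

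Second, and more seriously, the passage from $\P(0\in\xi_t^0)\le e^{-\delta t}$ to failure of strong survival is a genuine gap. Your suggested route via the upper invariant distribution of the additive system does not work for the $\RMS$: since $\Card(\xi_t)$ is nondecreasing, the upper invariant measure is the point mass on the all-infected configuration, which carries no information. The paper closes this step differently and without invariant measures: the weight process (your $\phi_c(\xi_t)$, the paper's $w_\rho(\xi_t)$) is a nonnegative supermartingale and hence converges almost surely, but each state change at a site $x$ with $|x|\le n$ moves the weight by at least a fixed $\alpha_n>0$; therefore every such site changes state only finitely often and fixes. Two neighbors cannot fix in different states (they would keep interacting at positive rate), so all sites fix in the same state, and that state cannot be ``infected'' because $\sum_{x\in T_d}c^{|x|}=+\infty$ while the supermartingale limit is almost surely finite. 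This is the argument you should use in place of the invariant-measure sketch.
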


In the next sections, we prove the results of Section \ref{Section Résultats}.

\section{Proof of the asymptotic shape theorem \ref{Théorème TFA d>1}} \label{Section Preuve du TFA}

\subsection{Graphical construction} \label{Section Construction graphique} 
Both Richardson's model and the contact process have a graphical construction with Poisson point processes on $\R^+$. It can be used to obtain a graphical construction, which allows us to use percolation techniques, see Harris \cite{Harris1978}. We extend this kind of construction to the $\RMS(\lambda,1)$ and the $\CPS(\lambda,1,\nu)$.

We endow $\R^+$ with the Borel $\sigma$-algebra $\mathfrak{B}(\R^+)$, and we denote by $M$ the set of locally finite counting measures $m=\sum_{i=1}^{+\infty} \delta_{t_i}$ on $\R^+$, which can be seen as sequences of jump times. We endow $M$ with the $\sigma$-algebra $\mathcal{M}$ generated by the maps $m\mapsto m(B)$, with $B\in \mathfrak{B}(\R^+)$. We define the measurable space $(\Omega,\mathcal{F})$ by 
\begin{align}(\Omega,\mathcal{F})=(M^{\E^d}\times M^{\E^d}\times M^{\Z^d},\mathcal{M}^{\otimes \E^d}\otimes \mathcal{M}^{\otimes \E^d}\otimes \mathcal{M}^{\otimes \Z^d}). \label{Définition de l'espace mesurable}
\end{align}
Remember that $\E^d$ is the set of \emph{oriented} edges between nearest neighbors of $\Z^d$. On this measurable space, we define the probability measures
$$\P_{\RMS(\lambda,1)}=\mathcal{N}_{\lambda}^{\otimes \E^d}\otimes \mathcal{N}_{1}^{\otimes \E^d}\otimes \delta_{\emptyset}^{\Z^d} \quad\text{and}\quad\P_{\CPS(\lambda,1,\nu)}=\mathcal{N}_{\lambda}^{\otimes \E^d}\otimes \mathcal{N}_{1}^{\otimes \E^d}\otimes\mathcal{N}_{\nu}^{\otimes \Z^d},$$ 
where for all $\mu>0$, $\mathcal{N}_{\mu}$ is the law of a Poisson process of intensity $\mu$, and $\delta_{\emptyset}$ is the counting measure associated with an empty set of jump times. The subscripts in $\P_{\RMS(\lambda,1)}$ and $\P_{\CPS(\lambda,1,\nu)}$ will often be removed when the choice of the model we study and the values of the parameters are clear.

We extend Harris' graphical construction \cite{Harris1978} to our models: we provide an informal description of it, see Figure~\ref{Representation_graphique} to visualize. We only present the case of the CPS, since the construction for the RMS is the same, except that there is no healing. Let $\omega=((\omega^I_e)_{e\in \E^d},(\omega^S_e)_{e\in \E^d},(\omega^H_z)_{z\in \Z^d})\in \Omega$. We draw a time line $\R^+$ above each site. On top of each site $z$, we draw squares at times given by $\omega^H_z$, which correspond to healing times. On top of each edge $e$, we draw horizontal arrows between the two endpoints of $e$, with the same orientation as the edge $e$, at times given by $\omega^I_e$ and $\omega^S_e$. Each of these arrows has a mark: $I$ for a time given by $\omega^I_e$ (infection time), and $S$ for a time given by $\omega^S_e$ (stirring time). 

We define \emph{open paths} on a configuration. An open path follows the time lines, using the horizontal arrows to jump from one time line to another, with the following constraints:
\begin{itemize}
    \item the path cannot go through a square,
    \item the path can follow a horizontal arrow with a I mark (that is, an infection time) when it encounters one, but it can also ignore it and stay on the same time line,
    \item when the path encounters a horizontal arrow with a S mark (that is, a stirring time), then it is forced to follow it if the endpoint of the arrow is healthy at this time, and cannot follow it if the endpoint is infected.
\end{itemize}
Note that the first two constraints are exactly those of an open path in the graphical construction of a contact process. Now, for any $A\subset \Z^d$, we define a process $(\xi^A_t)_{t\ge 0}$ on $\mathcal{P}(\Z^d)$ as follows: we fix $\xi_0=A$, and for all $t>0$, $y\in \xi_t^A$ if and only if there exists $x\in A$ such that there is an open path from $(x,0)$ to $(y,t)$. A direct extension of Harris's result \cite{Harris1978} proves that for any $A\subset \Z^d$, the process $(\xi_t^A)_{t\ge 0}$ is a well-defined Markov process with a generator $\mathcal{L}^{\CPS}_{\lambda,1,\nu}$, see \eqref{Définitions des modèles: générateur du CPS}, and initial configuration $\xi_0^A=A$.
Note that our exchanges are oriented: this will be convenient for coupling models with stirring and models without stirring.

Finally, to prove the additivity property (see Lemma \ref{Définition du modèle: le processus est un growth model}) for our models, we modify this graphical construction by changing the constraint for the stirring arrows: these arrows become non-oriented edges, and a path can always follow a stirring edge. The process obtained with this construction has the law of the same contact process: see the definition of the generator \eqref{Générateur du mélange}.
\begin{figure}
    \centering
    \includegraphics[width=6.5cm, height=5.5cm]{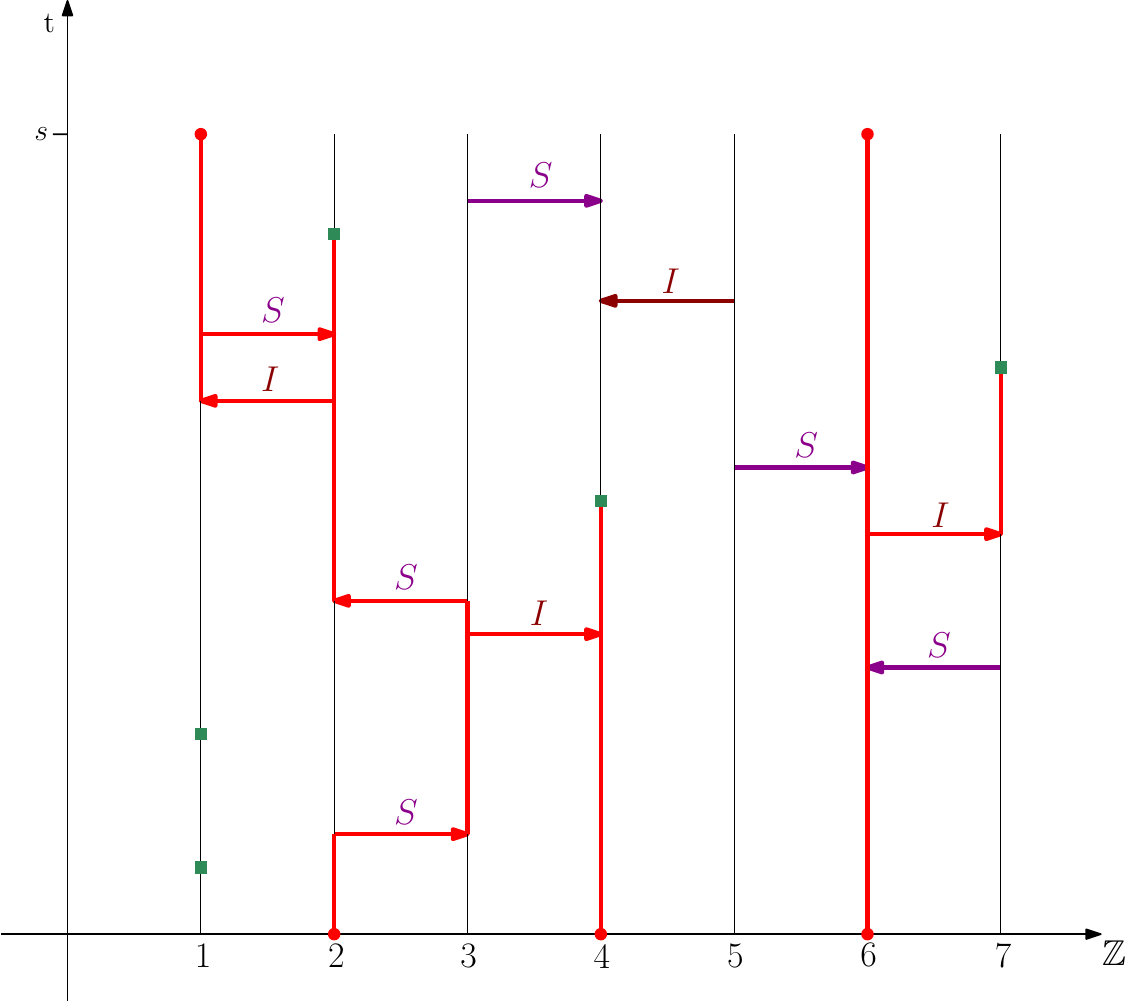}
    \caption{Graphical construction for the CPS on $\Z$, with initial configuration $A=\{2,4,6\}$, from time $0$ to time $s$. Brown (resp. purple) arrows with I-marks (resp. S-marks) correspond to infection times (resp. stirring time). Green squares correspond to healing times. Open paths starting from an infected site are in red. The set $\xi_s^A$ of infected sites at time $s$, starting from the initial configuration $A$, is the set of intersections between the open paths and the horizontal line of equation $t=s$: here it is equal to $\{1,6\}$. Note that the constraint of following a stirring arrow if and only if the site at the endpoint is healthy is necessary: site $1$ would be healthy at time $s$ if passing by a stirring arrow was mandatory, and site $3$ would be infected at time $s$ if it was optional.}
    \label{Representation_graphique}
\end{figure}

\subsection{Critical parameter of the CPS}

Harris \cite{Harris1974} proved that, for all dimensions $d\ge 1$, there exists a constant $\lambda_c^{\CP}(d)$, called the \emph{critical parameter of the contact process}, such that: 
\begin{itemize}
    \item for all $\lambda>\lambda_{c}^{\CP}(d)$, $\CP(\lambda,1)$ survives with positive probability,
    \item for all $\lambda<\lambda_{c}^{\CP}(d)$, $\CP(\lambda,1)$ dies out almost surely.
\end{itemize}

There is also a transition phase for the CPS. For all $\nu>0$, we define the \emph{survival function} $\theta_{\nu}$ of the CPS (at fixed stirring rate $\nu$, and healing rate $1$) as the map
$$\begin{array}{cccc}
\theta_{\nu}:&(0,+\infty) & \longrightarrow &  [0,1]\\
&\lambda & \longmapsto & \P_{\CPS(\lambda,1,\nu)}(\tau=+\infty)
\end{array}.$$
It follows directly from the graphical construction that $\theta_{\nu}$ is a non-decreasing map. This means that, for all dimension $d\ge 1$ and stirring rate $\nu$, there exists a constant $\lambda_{c}^{\CPS}(d,\nu)\in [0,+\infty]$, called the \emph{critical parameter} for the $\CPS$ of stirring rate $\nu$ in dimension $d$, such that:
\begin{itemize}
    \item for all $\lambda>\lambda_{c}^{\CPS}(d,\nu)$, $\P_{\CPS(\lambda,1,\nu)}(\tau=+\infty)>0$,
    \item for all $\lambda<\lambda_{c}^{\CPS}(d,\nu)$, $\P_{\CPS(\lambda,1,\nu)}(\tau=+\infty)=0$.
\end{itemize}

A comparison with a branching process gives $\lambda_{c}^{\CPS}(d,\nu)>0$. We can do a coupling between the CPS and a CP to prove that $\lambda_{c}^{\CPS}(d,\nu)<+\infty$: we will do it in Section \ref{Section Preuve du TFA}, see Lemma \ref{Lemme_sur_le_couplage}. We do not know the behavior of the CPS at the critical point $\lambda_{c}^{\CPS}(d,\nu)$.

\subsection{Translation operators, lifetime and hitting time} \label{Definition_du_modèle:definitions des_translations_de_base}

Translations in time and space are naturally defined on $\Omega$. For all $t\geq 0$, we define the translation operator on a locally finite counting measure $m=\sum_{i=1}^{+\infty} \delta_{t_i}$ on $\R^+$ by
$$\theta_t (m)= \sum\limits_{i=1}^{+\infty} \mathds{1}_{\{t_i\geq t\}} \delta_{t_i-t}.$$
The translation $\theta_t$ induces an operator on $\Omega$, still denoted by $\theta_t$: for all $\omega\in \Omega$, we set
$$\theta_t (\omega) = (\theta_t (\omega_e))_{e\in \E^d}.$$
This translation is a translation in time in the following sense: for all $t,t_0\geq 0$, $\theta_t(\xi_{t_0}^A)$ is the set of points $(y,t_0+t)$, with $y\in \Z^d$, such that there exists an open path in the graphical representation between $(x,t)$ and $(y,t_0+t)$, where $x\in A$. For all $x\in \Z^d$, we also define the following operator:
$$\forall \omega\in \Omega,~ T_x(\omega):= (\omega_{x+e})_{e\in \E^d},$$
where $x+e$ is the edge $e$ translated by the vector $x$. It is a translation in space in the following sense: for all $x\in \Z^d$ and $t\geq 0$, $T_x(\xi_t^A)=\xi_t^{A_x}$, where $A_x$ is the translation of the set $A\subset \Z^d$ by the vector $x$. Note that, since Poisson processes are invariant under translation and both $\P_\lambda$ and $\P_{\lambda,\nu}$ are product measures, these measures are stationary under $\theta_t$ and under $T_x$, for all $t\geq 0$ and $x\in \Z^d$.

In this section, we prove Theorem \ref{Théorème TFA d>1}. We want to prove inequalities \eqref{Le modèle est permanent}, \eqref{AML}, \eqref{SC} and \eqref{ALL} in order to apply Theorem 1 of Deshayes and Siest \cite{DeshayesSiest2024}. 

For all $\lambda>0$, the process $\RMS(\lambda,1)$ survives: we have $\P_\lambda(\tau=+\infty)=1$, so \eqref{Le modèle est permanent} is verified. For the CPS with fixed stirring rate $\nu$, we have to suppose that $\lambda>\lambda_c^{\CPS}(d,\nu)$ in order to have \eqref{Le modèle est permanent}. 

For the other inequalities, we use couplings between our models, contact processes and Richardson's models. We start with an informal description, using the graphical construction. We do it first for the $\CPS(\lambda,1,\nu)$. We are going to construct two processes, $(\eta_t)_{t\ge 0}$ and $(\zeta_t)_{t\ge 0}$.

The process $(\eta_t)_{t\ge 0}$ is obtained in the following way: each stirring time on the edge $(x,y)$ is replaced by a healing square at the same time at $x$, and the rest of the construction stays the same. See Figure~\ref{fig:Couplage_CPS_et_CP} to see the graphical representation of these two processes. There are only infection arrows and healing squares for the process $(\eta_t)_{t\ge 0}$: it is a contact process. Since the infections are the same as for the CPS, then the infection rate is $\lambda$. Each site has $2d$ neighbors and the stirring rate of the $\CPS(\lambda,1,\nu)$ is $\nu$, therefore the healing rate is $2d\nu+1$. 

The process $(\zeta_t)_{t\ge 0}$ is obtained in the following way: each stirring arrow is replaced by an infection arrow, and the rest of the construction stays the same. See Figure~\ref{fig:Couplage_CPS_et_RM}. There are only infection arrows in the graphical construction of the process $(\zeta_t)_{t\ge 0}$: it is Richardson's model, with infection rate $\lambda+\nu$.

For the $\RMS(\lambda,1)$, since its graphical construction is the same as that of the $\CPS(\lambda,1,\nu)$ minus the healing squares, we can proceed in the same way to define a process $(\zeta_t)_{t\ge 0}$, which is a $\RM(\lambda+1)$, and a process $(\eta_t)_{t\ge 0}$, which is a $\CP(\lambda,2d)$. 

We do a formal construction of our couplings in the following lemma.

\begin{figure}
    \begin{minipage}{0.49\linewidth}
    \raggedright
    \includegraphics[height=4cm, width=5.5cm]{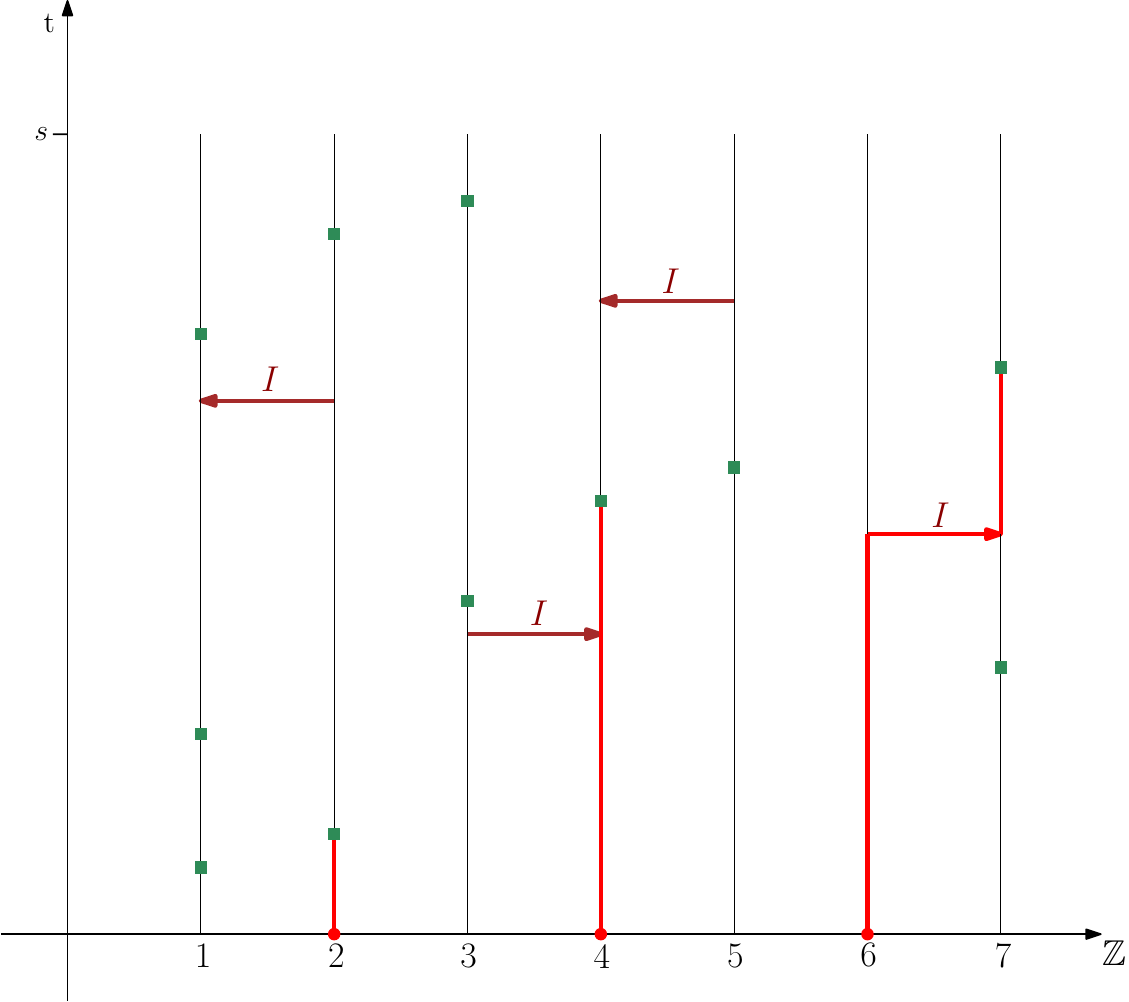}
    \end{minipage}
    \begin{minipage}{0.49\linewidth}
    \raggedleft
    \includegraphics[height=4cm, width=5.5cm]{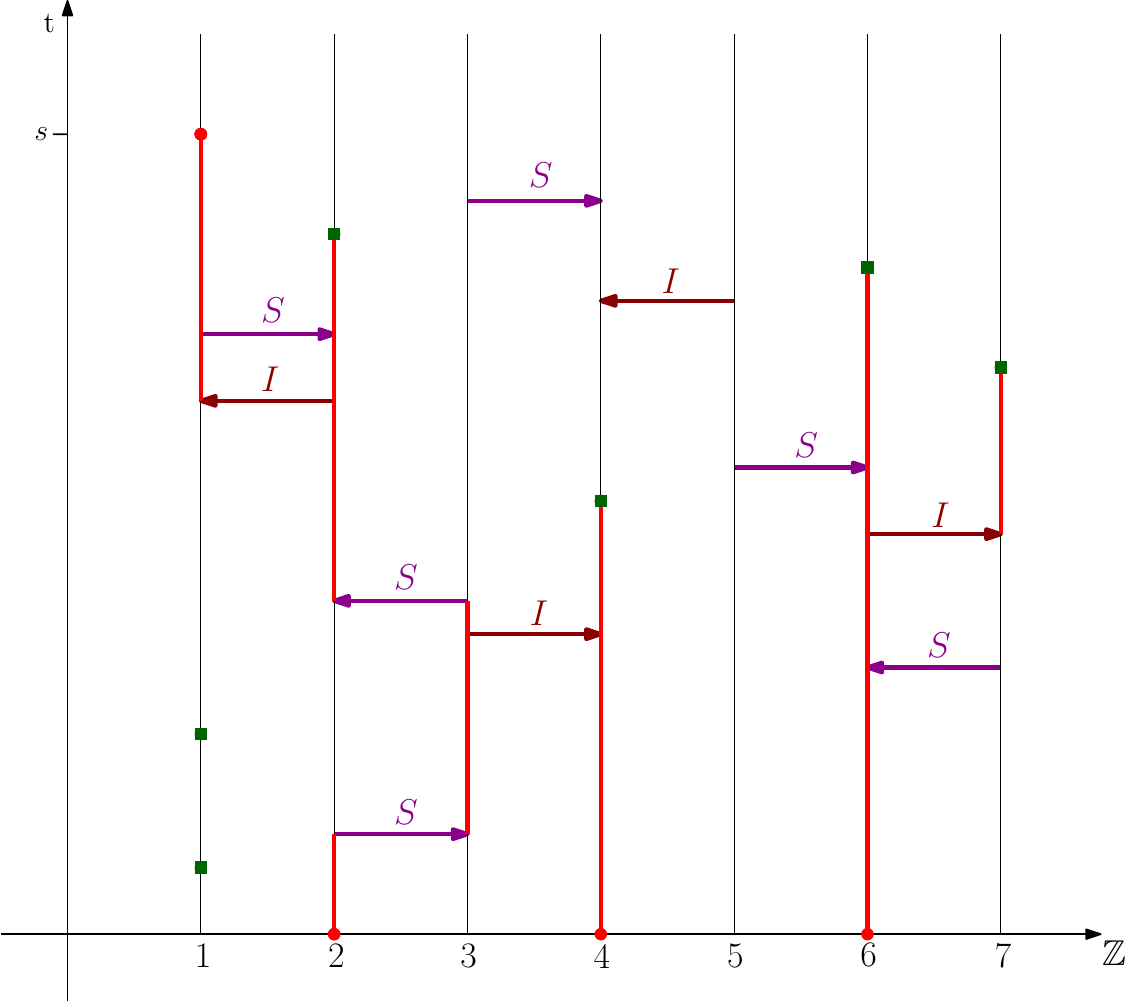}
    \end{minipage}
    \caption{On the left side, the process $(\eta_t^{\CPS})_{t\ge 0}$, and on the right side, the process $(\xi_t^{\CPS})_{t\ge 0}$. Color conventions are the same as in Figure~\ref{Representation_graphique}.}
    \label{fig:Couplage_CPS_et_CP}
\end{figure}

\begin{figure}
    \begin{minipage}{0.49\linewidth}
    \raggedright
    \includegraphics[height=4cm, width=5.5cm]{Representation_graphique_CPS_etape_4.pdf}
    \end{minipage}
    \begin{minipage}{0.49\linewidth}
    \raggedleft
    \includegraphics[height=4cm, width=5.5cm]{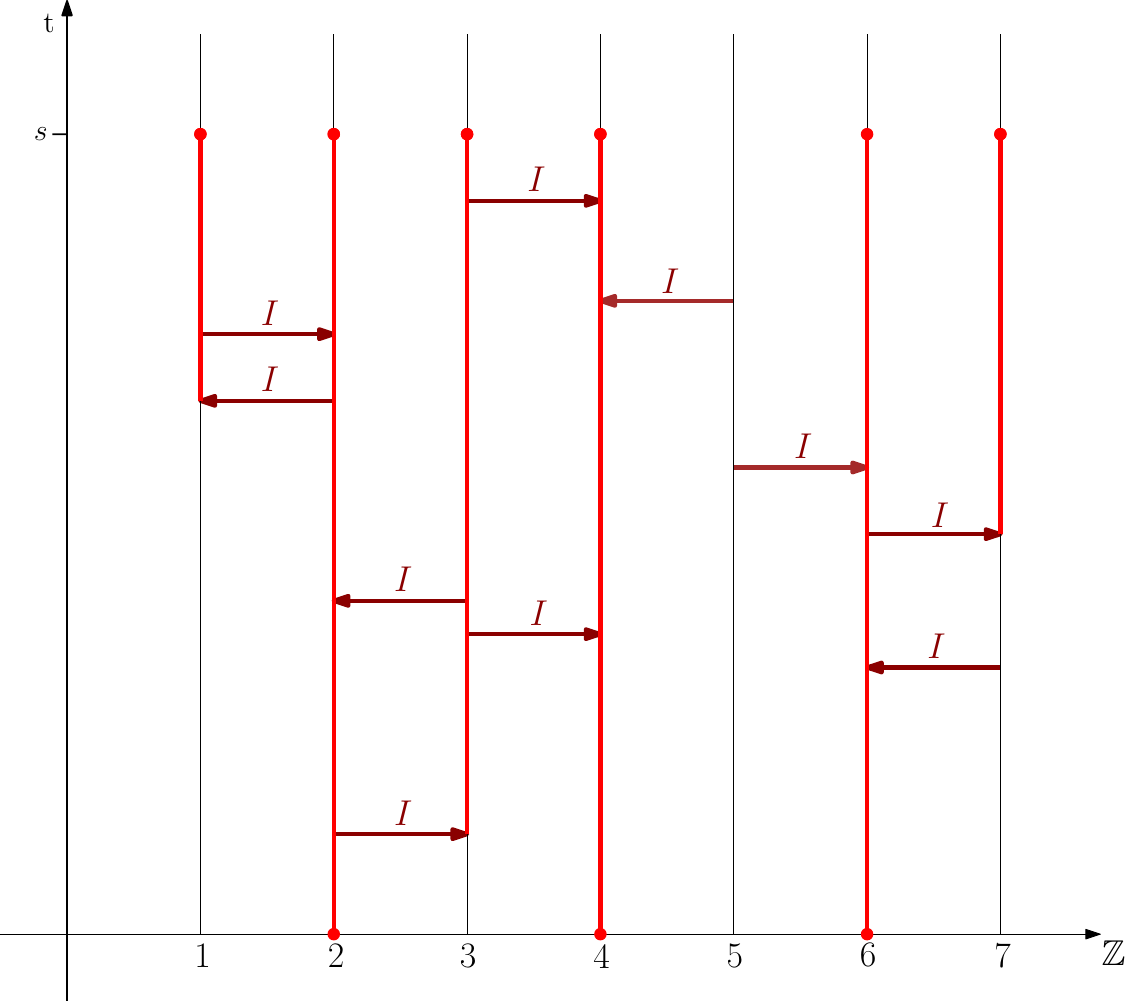}
    \end{minipage}
    \caption{On the left side, the process $(\xi_t^{\CPS})_{t\ge 0}$, and on the right side, the process $(\zeta_t^{\CPS})_{t\ge 0}$. Color conventions are the same as in Figure~\ref{Representation_graphique}.}
    \label{fig:Couplage_CPS_et_RM}
\end{figure}

\begin{lemma} \label{Lemme_sur_le_couplage}
    Let $\lambda,\nu>0$. 
    \begin{enumerate}
        \item There exist processes $(\eta_t^{\RMS})_{t\ge 0}$, $(\xi_t^{\RMS})_{t\ge 0}$, $(\zeta_t^{\RMS})_{t\ge 0}$, all defined on the measurable space $(\Omega,\mathcal F)$, such that:
    \begin{itemize}
        \item the process $(\eta_t^{\RMS})_{t\ge 0}$ is a $\CP(\lambda,2d)$.
        \item the process $(\xi_t^{\RMS})_{t\ge 0}$ is a $\RMS(\lambda,1)$.
        \item the process $(\zeta_t^{\RMS})_{t\ge 0}$ is a $\RM(\lambda+1)$.
        \item if $A\subset B\subset C\subset \Z^d$ and $\eta_0=A$, $\xi_0=B$ and $\zeta_0=C$, then we have, for all $t\ge0$:
        $$\eta_t^{\RMS} \subset \xi_t^{\RMS} \subset \zeta_t^{\RMS}.$$
    \end{itemize}
        \item There exist processes $(\eta_t^{\CPS})_{t\ge 0}$, $(\xi_t^{\CPS})_{t\ge 0}$ and $(\zeta_t^{\CPS})_{t\ge 0}$, all defined on the measurable space $(\Omega,\mathcal F)$, such that:
    \begin{itemize}
        \item the process $(\eta_t^{\CPS})_{t\ge 0}$ is a $\CP(\lambda,2d\nu+1)$.
        \item the process $(\xi_t^{\CPS})_{t\ge 0}$ is a $\CPS(\lambda,1,\nu)$.
        \item the process $(\zeta_t^{\CPS})_{t\ge 0}$ is a $\RM(\lambda+\nu)$.
        \item if $A\subset B\subset C\subset \Z^d$ and $\eta_0=A$, $\xi_0=B$ and $\zeta_0=C$, then we have, for all $t\ge0$:
        $$\eta_t^{\CPS} \subset \xi_t^{\CPS} \subset \zeta_t^{\CPS}.$$
    \end{itemize}
    Moreover, we have 
    $$0<\lambda_c^{\CPS}(d,\nu)\le (2d\nu+1)\lambda_c^{\CP}(d)<+\infty.$$
    \end{enumerate}
\end{lemma}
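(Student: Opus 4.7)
The plan is to realize the three processes of each statement as deterministic functions of a single realization of the Poisson families carried by $(\Omega,\mathcal{F})$, reinterpreting the marks of the graphical construction of Subsection~\ref{Section Construction graphique}. We let $(\xi^{\CPS}_t)_{t\geq 0}$ be the process produced by that construction under $\P_{\CPS(\lambda,1,\nu)}$. We define $(\eta^{\CPS}_t)_{t\geq 0}$ from the same infection arrows and the same healing squares, with an extra healing mark added at the tail $x$ for every stirring time on the oriented edge $(x,y)$; since each site has $2d$ outgoing edges each carrying stirrings at rate $\nu$, the effective healing rate at every site is $1+2d\nu$, and no other interaction is altered, so $(\eta^{\CPS}_t)$ is a $\CP(\lambda,2d\nu+1)$. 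We define $(\zeta^{\CPS}_t)_{t\geq 0}$ by keeping the infection arrows, relabeling each stirring arrow as an ordinary oriented infection arrow, and discarding the healing squares; each oriented edge then carries independent Poisson events at rate $\lambda+\nu$, so $(\zeta^{\CPS}_t)$ is a $\RM(\lambda+\nu)$. The RMS coupling is obtained in the same way after erasing the healing squares of the base construction, yielding $\CP(\lambda,2d)$ and $\RM(\lambda+1)$.

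For the inclusions $\eta_t^{\CPS}\subset\xi_t^{\CPS}\subset\zeta_t^{\CPS}$ we induct on the almost surely locally finite set of Poisson epochs, assuming the inclusions hold on $[0,t)$ and examining the update at time $t$. Infection arrows and original healing squares preserve both inclusions immediately, since $\zeta$ never loses sites and any infection $(x,y)$ that updates $\eta$ uses a source $x\in\eta\subset\xi$ which also updates $\xi$. The only delicate case is a stirring event on $(x,y)$: in $\xi$ it may simultaneously remove $x$ and add $y$; in $\eta$ it triggers the extra healing at $x$ (a no-op if $x\notin\eta$); in $\zeta$ it acts as a potential infection of $y$ by $x$. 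A short case analysis on the states of $x$ and $y$ just before $t$ in each process shows the inclusions persist; the key point is that the extra healing at the tail $x$ in $\eta$ removes exactly the site that might leave $\xi$ through the exchange, while the relabeled arrow in $\zeta$ either finds $y$ already present or adds it using $x\in\zeta_{t^-}$. The RMS version is verified identically.

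The bound $\lambda_c^{\CPS}(d,\nu)\leq (2d\nu+1)\lambda_c^{\CP}(d)$ is then immediate from the coupling: starting from $\{0\}$, the inclusion $\eta^{\CPS}\subset\xi^{\CPS}$ implies that whenever $\CP(\lambda,2d\nu+1)$ survives so does $\CPS(\lambda,1,\nu)$, and a time rescaling by $2d\nu+1$ identifies $\CP(\lambda,2d\nu+1)$ with $\CP(\lambda/(2d\nu+1),1)$, which survives whenever $\lambda>(2d\nu+1)\lambda_c^{\CP}(d)$. Strict positivity $\lambda_c^{\CPS}(d,\nu)>0$ follows by stochastic domination of $(|\xi_t|)$ by a continuous-time branching process with birth rate $2d\lambda$ per particle and death rate $1$ (since stirring preserves $|\xi_t|$), which is subcritical for $\lambda<1/(2d)$. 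We expect the main technical effort to lie in the stirring case of the induction; in particular the placement of the extra healing at the \emph{tail} rather than at the head of each stirring arrow is essential, since coupling at the head would allow a site to remain in $\eta$ while leaving $\xi$.
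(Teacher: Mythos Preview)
Your proposal is correct and follows essentially the same route as the paper: the paper defines the very same auxiliary configurations $\underline\omega$ (stirring arrows replaced by healing squares at their tail) and $\overline\omega$ (stirring arrows relabeled as infection arrows, healings discarded), identifies the resulting processes as $\CP(\lambda,2d\nu+1)$ and $\RM(\lambda+\nu)$, and then asserts the inclusions and the bound on $\lambda_c^{\CPS}$ exactly as you do. Your induction on Poisson epochs and the case analysis for the stirring event simply make explicit the verification that the paper leaves to the reader.
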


\begin{proof}
We use the graphical construction: we use notations of Section \ref{Section Construction graphique}. We construct the coupling only for the CPS: the coupling for the RMS is the same, only removing Poisson processes associated with healing, and taking $\nu=1$. Let 
$$\omega=((\omega^{I}_e)_{e\in \E^d},(\omega^{S}_e)_{e\in \E^d},(\omega^{H}_z)_{z\in \Z^d})$$
be a configuration of $\Omega$. We define two configurations $\underline \omega$ and $\overline \omega$ in the following way:

\begin{itemize}
    \item $\underline \omega^I=\omega^I$, $\underline \omega^S=\emptyset$ and for all $z\in \Z^d$, $\underline \omega^H_{z}=\sum\limits_{x\sim z}\omega^S_{(z,x)}$,
    \item $\overline \omega^I=\omega^I+\omega^S$ and $\overline \omega^S=\overline \omega^H=\emptyset$.
\end{itemize}

Note that since each site has $2d$ neighbors, and  
$$\P_{\CPS(\lambda,1,\nu)}=\mathcal{N}_{\lambda}^{\otimes \E^d}\otimes \mathcal{N}_{1}^{\otimes \E^d}\otimes\mathcal{N}_{\nu}^{\otimes \Z^d},$$
then under $\P_{\CPS(\lambda,1,\nu)}$, the law of the coordinate $\omega^H_{z}$ is $\mathcal N_{2d\nu+1}$.

We denote by $\eta=\eta(\underline \omega)$ the process built with the graphical construction constructed from the configuration $\underline \omega$, by $\xi=\xi(\omega)$ the process built with $\omega$, and by $\zeta=\zeta(\overline \omega)$ the process built with $\overline \omega$. It follows directly from the construction that under $\P_{\CPS(\lambda,1,\nu)}$, the process $\eta$ is a $\CP(\lambda,2d\nu+1)$, $\xi$ is a $\CPS(\lambda,1,\nu)$ and $\zeta$ is a
$\RM(\lambda+\nu)$. Moreover we have, for all $A\subset B\subset C\subset \Z^d$ and $t\ge 0$:
$$\eta^{A}_t\subset \xi^{B}_t \subset \zeta^{C}_t.$$
A comparison with a branching process gives $\lambda_c^{\CPS}(d,\nu)>0$: it works in the same way as for the contact process. Finally, since CP of healing rate $2d\nu+1$ is supercritical for all infection rates $\lambda>(2d\nu+1)\lambda_c^{\CP}(d)$, we deduce that:
$$\lambda_c^{\CPS}(d,\nu)\le(2d\nu+1)\lambda_c^{\CP}(d).$$
\end{proof}

The couplings of Lemma \ref{Lemme_sur_le_couplage} are the same as the ones Garet and Marchand used to prove their uniform growth controls in \cite{GaretMarchand2012}: only the parameters of the CP and the RM are different. We do the same restart procedure as they did for the CP. Let us present it for the CPS case: the procedure is the same for the RMS. A CP conditioned to survive has the linear growth properties we want, but the CP in our coupling can die out with positive probability. To counter that, if the CP dies out at time $t$, we restart it at time $t$ from the initial configuration $\delta_x$, with $x$ infected in the RMS, and we repeat this procedure until we start a CP that survives.

Let us describe this procedure formally. We simply write $(\eta_t)_{t\ge 0}$, $(\xi_t)_{t\ge 0}$ and $(\zeta_t)_{t\ge 0}$ for the processes $(\eta_t^{\CPS})_{t\ge 0}$, $(\xi_t^{\CPS})_{t\ge 0}$ and $(\zeta_t^{\CPS})_{t\ge 0}$ of Lemma \ref{Lemme_sur_le_couplage}. We define, for all $x\in \Z^d$, the lifetime of the process $(\eta_t^x)_{t\ge 0}$ starting from the initial configuration $\{x\}$:
$$\tau_x^{\CP}=\inf \{t\geq 0~:~\eta^x_t=\emptyset\}.$$
We define by recurrence a sequence of stopping times $(u_k)_{k\in \N}$ (which will be the restart times) and a sequence of points $(z_k)_{k\in \N}$ (which will be the points from which we start a new CP). We set $u_0=0$, $z_0=0$, and for all $k\geq 0$:
\begin{itemize}
    \item if $u_k<+\infty$ and $\xi_{u_k}\ne\emptyset$, then $u_{k+1}=\tau_{z_k}^{\CP}\circ \theta_{u_k}+u_k$,
    \item if $u_k=+\infty$ or $\xi_{u_k}=\emptyset$, then $u_{k+1}=+\infty$.
    \item if $u_{k+1}<+\infty$ and $\xi_{u_{k+1}}\ne\emptyset$, then $z_{k+1}$ is the smallest point of $\xi_{u_{k+1}}$ for the lexicographical order,
    \item if $u_{k+1}=+\infty$ or $\xi_{u_{k+1}}=\emptyset$, then $z_{k+1}=+\infty$.
\end{itemize}
We denote by $(\bar\eta_t)_{t\geq 0}$ the process obtained with the following restart procedure. Informally, the process $\bar\eta$ is the contact process $\eta$ from time $u_0=0$ to time $u_1$. At time $u_1$, we have $\eta_{u_1}=\emptyset$: the contact process $\eta$ dies out. At the same time, the process $\bar\eta$ restarts: starting from time $u_1$, it is the contact process $(\eta_t^{z_1})_{t\ge u_1}$. After that, when this translated contact process dies out, at time $u_2$, we restart it from a new point $z_2$ in the same manner. We say that the procedure \emph{stops} if 
$$K:=\inf \{k\in \N~:~u_{k+1}=+\infty\text{ or }z_k=+\infty\}$$
is finite. Note that, by construction, we cannot have simultaneously $u_k<+\infty$, $u_{k+1}=+\infty$ and $z_k=+\infty$: the procedure stops if the CPS dies out, or if the last restarting CP survives. In this latter case, the last restarting process has the law of a CP conditioned to survive, and the CPS survives. Note also that in the case of the RMS, the process $(\xi_t)_{t\ge 0}$ cannot die, so if the procedure stops, then it is because the last restarting CP survives.

More formally, the process $\bar\eta$ is defined as follows:
$$\forall0\leq k <K, ~ \forall u_k\leq t < u_{k+1},~\bar\eta_t=\eta_{t-u_k}\circ\theta_{u_k}\circ T_{z_k}$$
and
$$\forall t\geq u_K,~ \bar\eta_t=\eta_{t-u_K}\circ\theta_{u_K}\circ T_{z_K}.$$ 
By construction of the coupling of Lemma \ref{Lemme_sur_le_couplage}, we have directly the following result: for all $A\subset B \subset C \subset \Z^d$ and $t\ge 0$, 
\begin{align}
    \bar\eta_t^A\subset \xi_t^B. \label{Le_PC_qui_restart_minore}
\end{align}

We define the time of first infection of the site $x\in \Z^d$ in the process $(\bar \eta_t)_{t\ge 0}$:
$$t_{\overline{\CP}}(x)=\inf \{t\geq 0:x\in \bar \eta_t\},$$
With the couplings of Lemma \ref{Lemme_sur_le_couplage} and \eqref{Le_PC_qui_restart_minore}, we prove \eqref{Le modèle est permanent}, \eqref{AML}, \eqref{SC} and \eqref{ALL} for both the RMS and the CPS, for all infection rates $\lambda$ such that, respectively, the contact processes $(\eta_t^{\RMS})_{t\ge 0}$ and $(\eta_t^{\CPS})_{t\ge 0}$ are supercritical.
\begin{proposition} \label{TFA:Controles_demandes_pour_le_RMS_et_le_CPS}
\begin{enumerate}
    \item Let $\lambda>2d\lambda_c^{\CP}(d)$ and $(\xi_t)_{t\ge 0}$ be a $\RMS(\lambda,1)$. There exist some constants $A,B,M_1,M_2>0$ such that, for all $t\ge 0$ and $x\in \Z^d$:
\begin{align*}
\P(\exists y\in \Z^d,~t(y)\le t \text{ and } ||y||\ge M_1t)&\le A\exp(-Bt),\\
\P(t(x)>M_2||x||+t)&\le A\exp(-Bt).
\end{align*}
    \item  Let $\nu>0$, $\lambda>(2d\nu+1)\lambda_c^{\CP}(d)$ and $(\xi_t)_{t\ge 0}$ be a $\CPS(\lambda,1,\nu)$. There exist some constants $A,B,M_1,M_2>0$ such that, for all $t\ge 0$ and $x\in \Z^d$:
\begin{align*}
\P(\exists y\in \Z^d,~t(y)\le t  \text{ and } ||y||\ge M_1t)&\le A\exp(-Bt),\\
\P(t<\tau<+\infty)&\le A\exp(-Bt),\\
\P(t(x)>M||x||+t)&\le A\exp(-Bt).
\end{align*}
\end{enumerate}

\end{proposition}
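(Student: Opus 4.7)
The plan is to combine the couplings of Lemma~\ref{Lemme_sur_le_couplage} with the restart construction defined just above, mirroring the strategy of Garet and Marchand~\cite{GaretMarchand2012}. Throughout I will write $\mu=\lambda+1$ for the RMS and $\mu=\lambda+\nu$ for the CPS (the upper-bounding Richardson rate); the hypothesis on $\lambda$ guarantees that the lower-bounding contact process $\eta$ is strictly supercritical, so its single-site survival probability $p$ is strictly positive.

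For the at most linear growth bound I would use the upper inclusion $\xi_t\subset\zeta_t$ to reduce the problem to Richardson's model $\zeta$ of rate $\mu$. A site $y$ with $\|y\|\ge M_1 t$ can be infected by time $t$ only if some self-avoiding path from $0$ to $y$ of some length $n\ge M_1 t$ has all of its infection clicks before time~$t$. The probability that a sum of $n$ i.i.d.\ $\mathrm{Exp}(\mu)$ variables is at most $t$ decays super-exponentially in $n$ as soon as $n\gg \mu t$; combined with the bound $(2d)^n$ on the number of such paths and a summation over $n\ge M_1 t$, this yields the desired exponential decay once $M_1$ is chosen large enough (say, $M_1>2de\mu$).

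For the small cluster property (CPS only) I would analyze the restart procedure. On $\{\tau<+\infty\}$, the inclusion $\bar\eta\subset\xi$ prevents any restarted CP from surviving, so the procedure must stop because $\xi$ has died: hence $K<+\infty$ and $\tau\le u_K$. By the strong Markov property at each $u_k$ and the independence of disjoint time slabs in the graphical construction, $K$ is stochastically dominated by a geometric random variable of parameter $p$, while each increment $u_{k+1}-u_k$ has, conditional on the current CP dying out, exponential tail (a classical estimate for the extinction time of a supercritical CP conditioned on dying). A geometric sum of exponentially tailed variables still has exponential tail, which yields $\P(t<\tau<+\infty)\le A e^{-Bt}$.

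For the at least linear growth bound, the inclusion $\bar\eta\subset\xi$ gives $t(x)\le t_{\overline{\CP}}(x)$. On $\{\tau=+\infty\}$ (automatic for the RMS), the same analysis shows that $K<+\infty$ and that $u_K$ has exponential tail, and that from time $u_K$ onwards $\bar\eta$ evolves as a supercritical contact process rooted at $z_K$ conditioned to survive. Classical at-least-linear-growth estimates for such a CP (\cite{DurrettGriffeath1983} in dimension one, \cite{BezuidenhoutGrimmett1990} in higher dimension) then give an exponentially small probability that $t_{\CP}(x)>M\|x-z_K\|+s$; combining with the AML bound applied inside the dying CPs to control $\|z_K\|$ would complete the argument. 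The main delicate point, handled as in \cite{GaretMarchand2012}, is ensuring that the conditioning on the $K$-th restart surviving does not destroy these exponential bounds; this is done by working directly in the graphical construction, where the successive restart attempts are genuine independent Bernoulli trials of parameter $p$.
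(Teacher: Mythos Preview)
Your proposal is correct and follows exactly the approach the paper takes: the paper's own proof is simply a one-paragraph delegation to Garet and Marchand~\cite{GaretMarchand2012}, observing that the restart process $(\bar\eta_t)_{t\ge 0}$ and the couplings of Lemma~\ref{Lemme_sur_le_couplage} are precisely the ingredients used there, so their argument carries over verbatim once the underlying contact process is supercritical. What you have written is essentially an explicit unpacking of that delegated argument (Richardson upper bound for AML, geometric $K$ plus exponential-tail extinction times for SC, and the surviving-CP at-least-linear-growth estimate combined with control of $u_K$ and $\|z_K\|$ for ALL), so there is nothing to correct.
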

\begin{proof}
The process $(\bar \eta_t)_{t\ge 0}$ is exactly the same as the one Garet and Marchand defined in \cite{GaretMarchand2012} to prove \eqref{AML}, \eqref{SC} and \eqref{ALL} for the contact process in random environment, only using properties of the process $(\bar \eta_t)_{t\ge 0}$ and Lemmas \ref{Lemme_sur_le_couplage} and \eqref{Le_PC_qui_restart_minore}. Therefore, their proof can be adapted verbatim to obtain the growth controls \eqref{AML}, \eqref{SC} and \eqref{ALL} for the RMS (resp. the CPS), for all $\lambda$ such that the contact process $(\eta_t^{\RMS})_{t\ge 0}$ (resp. $(\eta_t^{\CPS})_{t\ge 0}$) is supercritical. Since we bound from below the RMS by a $\CP(\lambda,2d)$ (resp. the CPS by a $\CP(\lambda,2d\nu+1)$), then we have the growth controls for all $\lambda>2d\lambda_c^{\CP}(d)$ for the $\RMS(\lambda,1)$ (resp. $\lambda>(2d\nu+1)\lambda_c^{\CP}(d)$ for the $\CPS(\lambda,1,\nu)$). 
\end{proof}

\begin{proof}[Proof of Theorem \ref{Théorème TFA d>1}]
By Lemma \ref{Définition du modèle: le processus est un growth model} and Proposition \ref{TFA:Controles_demandes_pour_le_RMS_et_le_CPS}, the $\RMS(\lambda,1)$ is in the class $\mathcal C\cap \mathcal C_L$ of Deshayes and Siest \cite{DeshayesSiest2024} for all $\lambda>2d\lambda_c^{\CP}(d)$. For the $\CPS(\lambda,1,\nu)$, we have all the growth controls for $\nu>0$ and 
\begin{align*}
    \lambda>\max((2d\nu+1)\lambda_c^{\CP}(d),\lambda_c^{\CPS}(d,\nu))>(2d\nu+1)\lambda_c^{\CP}(d),
\end{align*}
by Lemma \ref{Lemme_sur_le_couplage}. Therefore, we can apply Theorem 1 of Deshayes and Siest \cite{DeshayesSiest2024} in both cases to obtain Theorem \ref{Théorème TFA d>1}.
\end{proof}
\section{Proof of Proposition \ref{Théorème Proposition sur densité de sites infectés}} \label{Section_Preuve_de_la_Proposition_sur_la_densité}

\subsection{Proof of an isoperimetric inequality}

Let $\lambda>0$ and $(\xi_t)_{t\ge 0}$ be a $\RMS(\lambda,1)$. For all $x\in \Z^d$, we denote by $v(x)$ the set of neighboring sites of $x$. We define, for all $1\leq i\leq 2d$,
$$\Fr_i(\xi_t):=\{x\in \Z^d ~:~ x\notin \xi_t \text{ and } \Card\left(v(x)\cap \xi_t\right)=i\},$$
the set of healthy sites that have exactly $i$ neighbors infected at time $t$, and
$$\Fr(\xi_t):=\bigcup_{1\leq i\leq 2d} \Fr_i(\xi_t)$$
the set of healthy sites that have at least one infected neighbor, which is also the set of healthy sites that can be infected at time $t$. A healthy site that has exactly $i$ infected neighbors is infected at rate $i\lambda$. At time $t$, a new infection arises at rate $I(\xi_t)\lambda$, where
$$I(\xi_t):=\sum_{1\leq i \leq 2d}i \Card[\Fr_i(\xi_t)].$$ 
We state the following discrete isoperimetric inequality, proved for example by Hamamuki in \cite{HamamukiIsopérimétrie}:

\begin{lemma}[Discrete isoperimetric inequality] \label{Inegalite_isoperimetrique}
    Let $A\subset \Z^d$ be a non-empty finite connected subset of $\Z^d$. There exists a constant $C=C_d>0$ such that:
    $$\Card(\Fr (A))\geq C \Card(A)^{1-\frac{1}{d}}.$$
\end{lemma}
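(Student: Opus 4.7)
The plan is to derive this from two standard ingredients: the Loomis--Whitney projection inequality and a column-by-column count of boundary points. Connectedness of $A$ plays no role in my argument; only non-emptiness and finiteness are used.

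For each $i \in \{1,\dots,d\}$, let $\pi_i : \Z^d \to \Z^{d-1}$ be the coordinate projection that forgets the $i$-th coordinate. The first step is to establish the inequality
$$\Card(\Fr(A)) \geq 2\, \Card(\pi_i(A)) \quad\text{for every } i.$$
Fix $i$ and any $y \in \pi_i(A)$. The fiber $A \cap \pi_i^{-1}(\{y\})$ is a non-empty finite subset of a line in the $i$-th direction, so it attains a minimum and a maximum $i$-th coordinate. The two lattice points lying just outside those extrema in the $i$-th direction are not in $A$, are adjacent to $A$, and both project to $y$. Since the fibers are disjoint as $y$ varies over $\pi_i(A)$, these pairs of boundary points are disjoint as well, and summing over $y$ yields the bound.

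The second step is a direct application of the Loomis--Whitney inequality, $\Card(A)^{d-1} \leq \prod_{i=1}^d \Card(\pi_i(A))$, which gives
$$\max_{1 \leq i \leq d} \Card(\pi_i(A)) \;\geq\; \left( \prod_{i=1}^d \Card(\pi_i(A)) \right)^{1/d} \;\geq\; \Card(A)^{(d-1)/d}.$$
Combining with the first step, one obtains $\Card(\Fr(A)) \geq 2\, \Card(A)^{1 - 1/d}$, so the lemma holds with $C_d = 2$ (in fact independent of $d$).

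There is no serious obstacle. The only point requiring a moment's care is verifying that the boundary points produced by different fibers are genuinely distinct—which they are, because they have different $\pi_i$-projections—so that the bound $\Card(\Fr(A)) \geq 2\,\Card(\pi_i(A))$ is an honest count rather than an over-count. Since only the exponent $1 - 1/d$ matters for the application in Proposition~\ref{Théorème Proposition sur densité de sites infectés}, optimizing the constant $C_d$ is unnecessary, and this short Loomis--Whitney argument is the cleanest route.
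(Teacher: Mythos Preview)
Your argument is correct. The paper does not actually prove this lemma; it merely states it and cites Hamamuki \cite{HamamukiIsopérimétrie} for a proof, so there is no in-paper argument to compare against. Your route via the Loomis--Whitney inequality combined with the elementary fiber count $\Card(\Fr(A)) \geq 2\,\Card(\pi_i(A))$ is a clean, self-contained proof that moreover yields the explicit dimension-independent constant $C_d = 2$ and, as you note, does not require connectedness of $A$ (the paper only invokes the lemma on connected components, so nothing is lost either way).
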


This discrete isoperimetric inequality allows us to bound from below $I(\xi_t)$, by bounding from below the cardinality of the set of healthy sites that can be infected at time~$t$. 

\begin{proposition} \label{Inegalite_isoperimetrique_sur_notre_modele}
There exists a constant $C=C_d>0$ such that, for all $t>0$, we have
$$I(\xi_t)\geq C \Card(\xi_t)^{1-\frac{1}{d}}.$$
\end{proposition}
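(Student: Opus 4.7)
The plan is to reduce Proposition~\ref{Inegalite_isoperimetrique_sur_notre_modele} to Lemma~\ref{Inegalite_isoperimetrique} via two elementary observations: first, that $I(\xi_t)$ is nothing other than the size of the oriented edge boundary of $\xi_t$ and in particular dominates $\Card(\Fr(\xi_t))$; second, that the possible lack of connectedness of $\xi_t$ is handled by decomposing into connected components and exploiting subadditivity of $x\mapsto x^{1-1/d}$.

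First I would rewrite
$$I(\xi_t) = \sum_{x\in \Fr(\xi_t)} \Card(v(x)\cap \xi_t),$$
noting that each healthy site $x\in\Fr_i(\xi_t)$ contributes exactly $i$ oriented edges of $\E^d$ going from $\xi_t$ to its complement and ending at $x$. Consequently $I(\xi_t)$ equals the cardinality of the oriented edge boundary of $\xi_t$, and in particular $I(\xi_t)\ge\Card(\Fr(\xi_t))$.

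Next, since the graphical construction produces only finitely many infections by time $t$ when started from the finite configuration $\{0\}$, the set $\xi_t$ is a.s.\ finite, and I can write $\xi_t = A_1\sqcup\cdots\sqcup A_k$ as the disjoint union of its nonempty connected components in $\Z^d$. Each oriented edge leaving $\xi_t$ leaves exactly one of the $A_j$, so that $I(\xi_t) = \sum_{j=1}^k I(A_j) \ge \sum_{j=1}^k \Card(\Fr(A_j))$; applying Lemma~\ref{Inegalite_isoperimetrique} to each finite, nonempty, connected $A_j$ then yields
$$I(\xi_t) \ge C\sum_{j=1}^k \Card(A_j)^{1-1/d}.$$
Since $\alpha:=1-1/d \in [0,1)$, the map $x\mapsto x^\alpha$ is subadditive on $\R_+$; iterating $(a+b)^\alpha\le a^\alpha+b^\alpha$ gives $\sum_{j=1}^k\Card(A_j)^\alpha \ge \bigl(\sum_{j=1}^k \Card(A_j)\bigr)^\alpha = \Card(\xi_t)^\alpha$, which closes the proof with the same constant $C=C_d$ as in Lemma~\ref{Inegalite_isoperimetrique}.

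The only subtlety I anticipate is the bookkeeping in the second step: a single healthy site $x\in\Fr(\xi_t)$ may be adjacent to several distinct components of $\xi_t$ simultaneously, so the vertex boundaries $\Fr(A_j)$ can overlap and one cannot naively add the bounds provided by Lemma~\ref{Inegalite_isoperimetrique} on the vertex boundaries. Passing through the edge quantity $I$ rather than the vertex quantity $\Fr$ sidesteps this, since each edge leaves exactly one component.
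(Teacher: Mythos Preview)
Your proof is correct and follows essentially the same route as the paper's: decompose $\xi_t$ into connected components, apply Lemma~\ref{Inegalite_isoperimetrique} componentwise, and use subadditivity of $x\mapsto x^{1-1/d}$. Your edge-boundary interpretation of $I(\xi_t)$ makes the inequality $I(\xi_t)\ge\sum_j\Card(\Fr(A_j))$ explicit, whereas the paper simply asserts it; your remark on the overlap subtlety is a nice clarification of why the edge count, not the vertex count, is the right intermediate quantity.
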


\begin{proof}
We denote by $\mathcal{C}$ the set of connected components of $\xi_t$. In order to bound from below the quantity $I(\xi_t)$, we use Lemma \ref{Inegalite_isoperimetrique} on each element of $\mathcal{C}$: there exists $C=C_d>0$ such that
\begin{align}\sum\limits_{A\in \mathcal{C}} \Card(\Fr (A))\geq C \sum_{A\in \mathcal{C}} \Card(A)^{1-\frac{1}{d}}. \label{Inegalite_isoperimetrique_Minoration_de_Card(Fr(A))}
\end{align}
But we know that $I(\xi_t)\geq \sum\limits_{A\in \mathcal{C}} \Card(\Fr (A))$. With these inequalities, and by \eqref{Inegalite_isoperimetrique_Minoration_de_Card(Fr(A))}, we deduce that:
\begin{align}
I(\xi_t)=\sum_{1\leq i \leq 2d}i \Fr_i(\xi_t)&\geq \sum\limits_{A\in \mathcal{C}} \Card(\Fr(A)) \notag\\
&\geq C \sum_{A\in \mathcal{C}} \Card(A)^{1-\frac{1}{d}} \label{Line 2}\\
&\geq C\left(\sum\limits_{A\in \mathcal{C}} \Card(A)\right)^{1-\frac{1}{d}} \geq C \Card(\xi_t)^{1-\frac{1}{d}}.\label{Line 3}
\end{align}
To go from \eqref{Line 2} to \eqref{Line 3} we used that, for all $d\geq 2$, $n\in \N^*$ and $a_1,...,a_n\geq 0$, we have
$$\sum\limits_{i=1}^n a_i^{1-\frac{1}{d}}\geq \left(\sum\limits_{i=1}^n a_i\right)^{1-\frac{1}{d}}.$$
\end{proof}

\subsection{Proof of Proposition \ref{Théorème Proposition sur densité de sites infectés}}

We recall that we defined a constant $C$ in Proposition \ref{Inegalite_isoperimetrique_sur_notre_modele}. Let $(Y_t)_{t\geq 0}$ be a birth process with birth rate $q_i=C i^{1-\frac{1}{d}} \lambda$, $i\in \N$, and initial configuration $Y_0=1$: it is a Markov process on $\N$ which jumps from $i$ to $i+1$ at rate $q_i$. For more information about this process, see for example \cite{Norris1997}. We have $\Card(\xi_0)=1$, and conditionally on $\xi_t$ and $\Card(\xi_t)=i$, the next infection in $\xi_t$ happens at rate $I(\xi_t)\lambda$. Moreover, by Proposition \ref{Inegalite_isoperimetrique_sur_notre_modele}, we have $I(\xi_t)\lambda\geq q_i.$ Therefore, we can make a coupling between the process $(Y_t)_{t\geq 0}$ and the process $(\Card(\xi_t))_{t\geq 0}$ for which the birth process is dominated by the process $(\Card(\xi_t))_{t\geq 0}$. Therefore, we only need to show that $\P[Y_t \ge M t^d]\ge 1-\left(\frac{C\lambda}{t}\right)$.

We set, for all $k\in \N$ and $t\geq 0$,
$$F(k)=\sum\limits_{i=1}^{k-1} i^{\frac{1}{d}-1}\quad \text{and} \quad X_t=F(Y_t)-C\lambda t.$$

\noindent\underline{Step 1.} Let us show that the process $(X_t)_{t\geq 0}$ is a martingale. The process $(Y_t)_{t\geq 0}$ is a birth process with birth rate $(q_i)_{i\in \N}$, so by denoting by $\L$ its generator we have, for all continuous functions $f:\R \to \R$ and $n\in \N$, 
\begin{align*}
    \L f(n)&=q_n\left(f(n+1)-f(n)\right).
\end{align*}
In particular, we have: 
\begin{align*}
    \L F(n)&=q_n\left(F(n+1)-F(n)\right)=C \lambda.
\end{align*}
Applying for example Theorem 6.2 of \cite{LeGall2016}, we obtain that the process $(X_t)_{t\geq 0}$ is a martingale. We deduce in particular that we have, for all $t\geq 0$, $\E(X_t)=\E(X_0)=0$ and $\E[F(Y_t)]=C \lambda t$.

\noindent\underline{Step 2.} Let us show that we have, for all $t\geq 0$, 
$$\Var(X_t)\leq C \lambda t.$$
We have
\begin{align*}
    X_t^2\le F(Y_t)^2
    \le \left(\sum\limits_{i=1}^{Y_t-1} i^{\frac{1}{d}-1}\right)^2 
    \le \left(\int_1^{Y_t} x^{\frac{1}{d}-1}dx\right)^2
    \le d^2Y_t^{\frac{2}{d}}
    \le d^2Y_t,
\end{align*}
since $d\ge 2$. Since the process $(Y_t)_{t\ge 0}$ is stochastically dominated by a Yule process of birth rate $C\lambda$ (that is, a birth process with birth rates $iC\lambda$), then $Y_t$ is integrable, and so $X_t^2$ is integrable too. Therefore, $X_{t}$ has a finite variance. Now let $t,s>0$. We recall that the conditional variance of $X_{t+s}$ with respect to $X_t$ is defined by:
    \begin{align*}
        \Var(X_{t+s}|X_t)&=\E[(X_{t+s}-\E[X_{t+s}|X_t])^2|X_t]=\E[(X_{t+s}-X_t)^2|X_t],
    \end{align*}
    the second equality coming from the fact that here, $(X_t)_{t\ge 0}$ is a martingale, by Step $1$. By the law of total variance, we have
    \begin{align}
        \Var(X_{t+s})&=\E[\Var(X_{t+s}|X_t)]+\Var(\E[X_{t+s}|X_t]) =\E[\Var(X_{t+s}|X_t)]+\Var(X_t), \label{Processus de naissance Formule de la variance totale}
    \end{align}
    the second equality coming from the fact that $(X_t)_{t\ge 0}$ is a martingale. Since $(Y_u)_{u\geq 0}$ is a birth process, we know that (see for example \cite{Norris1997}):
    \begin{align*}
        \P[Y_{t+s}=Y_t+1|Y_t]=q_{Y_t}s+o(s) \quad \text{and}\quad \P[Y_{t+s}=Y_t|Y_t]=1-q_{Y_t}s+o(s).
    \end{align*}
    We deduce that:
    \begin{align*}
        \P[X_{t+s}=X_t+Y_t^{\frac{1}{d}-1}-C\lambda s|X_t]&=q_{Y_t}s+o(s)\\
        \text{and}\quad\P[X_{t+s}=X_t-C\lambda s|X_t]&=1-q_{Y_t}s+o(s).
    \end{align*}
    Therefore we have, using \eqref{Processus de naissance Formule de la variance totale}:
    \begin{align*}
    \frac{\Var(X_{t+s})-\Var(X_t)}{s}=\frac{\E[\Var(X_{t+s}|X_t)]}{s}&=\frac{\E[\E[(X_{t+s}-X_t)^2|X_t]]}{s} \\ &=\E[Y_t^{\frac{2}{d}-2}q_{Y_t}]+o(1)\\
    &=\E[Y_t^{\frac{2}{d}-2}CY_t^{1-\frac{1}{d}}\lambda] +o(1)\\
    &=\E[Y_t^{\frac{1}{d}-1}C\lambda]+o(1)\le C \lambda +o(1),
    \end{align*}
the inequality coming from the fact that $Y_t\in \N^*$ and that $\frac{1}{d}-1<0$. Since the domain of the generator is the entire set of continuous functions, the map $t\mapsto\E[X_t^2]$ is differentiable, and we have $\frac{d}{dt} \Var(X_t)\leq C \lambda$. Therefore: 
$$\Var(X_t)\le C \lambda t.$$

\noindent\underline{Step 3.} We extend the function $F$ by linear interpolation to an increasing and bijective function, still denoted by $F$, from $[1,+\infty)$ to $[0,+\infty)$. Let us show that, for all $t$ large enough,
\begin{align}
F^{-1}(dt^{\frac{1}{d}})\ge t. \label{Preuve Propositon 2.4: équivalent}
\end{align}
For all $k\in \N^*$, we have
\begin{align*}
    \int_0^{k-1} x^{\frac{1}{d}-1}dx \le \sum\limits_{i=1}^{k-1} i^{\frac{1}{d}-1} \le \int_1^{k} x^{\frac{1}{d}-1}dx,
\end{align*}
and so:
\begin{align*}
    d\left(k-1\right)^{\frac{1}{d}}\le F(k) \le dk^{\frac{1}{d}}-1, \quad\text{therefore}\quad F(k)\le dk^{\frac{1}{d}}\le F(k+1).
\end{align*}
Since $F^{-1}$ is increasing, we have:
\begin{align*}
   k&\le F^{-1}(dk^{\frac{1}{d}}).
\end{align*}
Finally, since $F$ is obtained by linear interpolation from $F_{|\N^*}$, we have \eqref{Preuve Propositon 2.4: équivalent}.

\noindent\underline{Step 4.} Now we prove that:
$$\P(\Card(\xi_t)\ge Mt^d)\ge 1-\frac{4}{C\lambda t}.$$
Using that $F$ is increasing and \eqref{Preuve Propositon 2.4: équivalent}, we have for all $t$ large enough:
\begin{align*}
    1-\P\left[|F(Y_t)-C \lambda t|\ge \frac{C \lambda}{2}t \right]&\le1-\P\left[C \lambda t-F(Y_t)\ge \frac{C \lambda}{2}t \right]\\
    &\le\P\left[C \lambda t-F(Y_t)\le \frac{C \lambda}{2}t \right]\\
    &\le\P\left[F(Y_t)\ge \frac{C \lambda}{2}t  \right]\\
    &\le\P\left[Y_t \ge F^{-1}\left(\frac{C \lambda}{2}t\right)\right]\\
    &\le \P\left[Y_t \ge F^{-1}\left(d\left[\left(\frac{C \lambda}{2d}\right)^d t^d\right]^{\frac{1}{d}}\right)\right]\\
    &\le \P\left[Y_t \ge \left(\frac{C \lambda}{2d}\right)^d t^d\right]\le \P[Y_t \ge B t^d],
\end{align*} 
where $B=\left(\frac{C \lambda}{2d}\right)^d$. Moreover, by the Tchebychev inequality we have, for all $t>0$,
\begin{align*}
    \P\left(|F(Y_t)-C \lambda t|\ge  \frac{C \lambda}{2}t \right)\le \frac{4}{C \lambda t},
\end{align*}
so we have finally:
\begin{align*}
    \P(\Card(\xi_t)\ge Bt^d)&\ge \P[Y_t \ge B t^d]\ge 1-\frac{4}{C \lambda t}\ge 1-\frac{A}{\lambda t}, \quad\text{with }A=\frac{4}{C}.
\end{align*}

\section{Proof of Theorem \ref{Théorème Fixation RMS}}


We define a family of particles indexed by $\Z^d$, whose positions evolve with time:
\begin{itemize}
    \item at any time, there is exactly one particle per site,
    \item the particles move according to stirring times: two neighboring particles exchange their positions at a stirring time if and only if, at this time, they are located on two sites in different states.
\end{itemize}
Note that the particle $a \in \Z^d$ is not necessarily located on site $a$. More formally, for every $a \in \Z^d$, we define the process $(P_t^a)_{t \geq 0}$, called the \emph{trajectory of particle $a$}, using the graphical construction of the model (see Section \ref{Section Construction graphique}). The initial positions of the particles $(P_0^a)_{a \in \Z^d}$ define a partition of $\Z^d$. Then, for all $t > 0$, $P_t^a$ is obtained by following the path in the graphical construction starting from $(P_0^a, 0)$, and moving along the temporal lines with the following constraints:
\begin{enumerate} 
\item The path does not pass through any infection arrow, 
\item The path follows a stirring arrow if and only if there is exactly one infected site incident to the arrow. \label{Une particule se déplace avec contrainte} 
\end{enumerate}
We define a process $(\Tilde{\xi}_t)_{t \geq 0}$ that carries the information about the state and position of each particle over time. For all $t \geq 0$, we set $\Tilde{\xi}_t=(\xi_t\circ P_t,P_t)$, where $P_t$ is the map
$$\begin{array}{ccc}
\Z^d & \longrightarrow & \Z^d \\ a & \longmapsto & P_t^a 
\end{array}.$$ 
By construction, we immediately have the following lemma: 
\begin{lemma} \label{Lemme Propriétés des particules} 
\begin{enumerate} 
\item At every time $t > 0$, the positions $(P_t^a)_{a \in \Z^d}$ of the particles at time $t$ form a partition of $\Z^d$. 
\item The process $(\Tilde{\xi}_t)_{t \geq 0}$ satisfies the strong Markov property. \label{Le processus des particules vérifie propriété Markov forte} 
\item Once a particle $a \in \Z^d$ is infected, it remains infected forever: 
\[
\text{if } \Tilde{\xi}_t(a) = (1, P_t^a), \text{ then: } \forall s \geq t,~\Tilde{\xi}_s(a) = (1, P_s^a). \label{Particule infectée un jour, infectée toujours} 
\] 
\item A site $x \in \Z^d$ is healthy at time $t$ if and only if a healthy particle is located at $x$ at time $t$: 
$$x \notin \xi_t \iff \exists a \in \Z^d,~\Tilde{\xi}_t(a) = (0, x).$$ 
\label{Origine saine à l'instant t si une particule est venue soigner} 
\end{enumerate} 
\end{lemma}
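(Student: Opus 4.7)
The plan is to read each of the four statements off the graphical construction by following the joint process $\Tilde{\xi}$ through the (locally finite, in any finite region) sequence of Poisson events that can affect it. Between consecutive events $\Tilde{\xi}_t$ is constant, so the whole proof reduces to checking, case by case, that infection arrows and stirring arrows preserve each property.

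For (1), the starting point is that $a\mapsto P_0^a$ is a bijection of $\Z^d$ by definition. Infection times do not move particles. At a stirring time on an edge $\{x,y\}$, the rule that the path follows the arrow iff exactly one of $x,y$ is infected forces the two particles currently at $x$ and $y$ to react identically: either both swap positions (when exactly one of $x,y$ is infected) or both remain in place (when $x,y$ share the same state). So the bijection property of $a\mapsto P_t^a$ is preserved, and (1) follows.

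For (3), the same case analysis gives the conclusion, using that the stirring update swaps the \emph{states} of $x$ and $y$ at exactly those times when it also swaps the \emph{particles} at $x$ and $y$. If particle $a$ is infected and at $x=P_{t^-}^a$, then an incoming infection arrow changes nothing, while at a stirring time on $\{x,y\}$ either $y$ is infected and neither particle moves (so $a$ stays on an infected site), or $y$ is healthy and the states swap together with the particles, so $a$ is carried to $y$, which has just become infected. Note that this step uses the absence of healing squares, which is precisely why the statement is specific to the RMS. Statement (4) is then immediate from (1): for every $x$ and $t$ there is a unique $a$ with $P_t^a=x$, and by definition $\Tilde{\xi}_t(a)=(\xi_t(x),x)$, so $x\notin\xi_t$ iff $\Tilde{\xi}_t(a)=(0,x)$.

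The only mildly delicate point is (2). The plan is to observe that $\Tilde{\xi}_{t+\cdot}$ is a deterministic measurable functional of $\Tilde{\xi}_t$ together with the restrictions to $[t,+\infty)$ of the Poisson processes driving the graphical construction; combined with the independence of these restrictions from the natural filtration up to time $t$, this yields the simple Markov property on the enlarged state space $\{0,1\}^{\Z^d}\times(\Z^d)^{\Z^d}$. To pass to the strong Markov property, I would reuse the Feller argument of Theorem~B3 of \cite{LiggettSIS} already invoked after \eqref{Définitions des modèles: générateur du CPS}, now applied with enlarged local generators obtained by appending, to each infection or stirring transition, its effect on the particle map $P$. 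The main obstacle here is essentially bookkeeping: verifying that the enlarged process is still c\`adl\`ag Feller despite the much larger state space, and that all particle trajectories $(P_t^a)_{a\in\Z^d}$ are simultaneously well defined, both of which are standard once the underlying Poisson graphical construction has been shown to exist.
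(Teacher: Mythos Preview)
Your proposal is correct and follows exactly the spirit of the paper: the authors simply assert that the lemma holds ``by construction'' and give no further argument, and what you have written is precisely the case-by-case verification on infection and stirring events that makes this assertion rigorous. Your treatment of (1), (3) and (4) matches the intended reading of the graphical construction, and your remark that (3) relies on the absence of healing squares is exactly the point; for (2) your Feller/Poisson-shift argument is the standard way to promote the graphical-construction Markov property to a strong one, and is consistent with the paper's earlier appeal to Theorem~B3 of Liggett.
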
 
Point \eqref{Origine saine à l'instant t si une particule est venue soigner} of Lemma \ref{Lemme Propriétés des particules} justifies the introduction of these processes: we will obtain information about the state of sites by studying the trajectories of the particles. 


\subsection{Proof of the first point of Theorem \ref{Théorème Fixation RMS}}

\noindent\underline{Step 1.} \label{Fixation Preuve Dim 1-2 étape 1}
We show that almost surely, for all $x\in \Z^d$, only finitely many particles reach the site $x$ while remaining healthy, that is:
\begin{equation} \label{Fixation Nombre fini de billes}
    \#\{a\in \Z^d ~:~ \iota_a^x<+\infty\}<+\infty \quad \text{a.s.},
\end{equation}
where
$$\iota_a^x=\inf\{t\geq 0 ~:~ \Tilde{\xi}_t(a)=(0,x)\}.$$ 
We will prove \eqref{Fixation Nombre fini de billes} for $x=0$: the proof is similar for any $x$. Let $\eta$ denote the configuration $(\mu,\mathrm{Id})$, where $\mu$ is the initial configuration of the process (non-empty a.s.), and $\mathrm{Id}$ is the configuration where each particle $a$ starts at site $a$. For a fixed particle $a\in\Z^d\setminus \{0\}$, we show:
\begin{equation*}
    \P_\eta (\iota_a^0<+\infty)\leq \left(\frac{1}{1+\lambda}\right)^{|a|_1}, 
\end{equation*}
where $|a|_1=\sum_{i=1}^{d}|a_i|$ and $\P_\eta$ is the conditional probability starting from the initial configuration $\eta$. Fix $a \in \Z^d\setminus \{0\}$ and $n:=|a|_1$. We define $n+1$ stopping times $(u_k)_{0\leq k \leq n}$ as follows: set $u_0=0$, and for all $1\leq k \leq n$, $u_k$ is the first time when particle $a$ is at a distance $n-k$ from the origin while remaining healthy:
$$u_{k}=\inf\left\{t\geq 0: \exists y\in \Z^d,~|y|_1=n-k\text{ and }\Tilde{\xi}_t(a)=(0,y) \right\}.$$
Note that for $0\leq k_1<k_2 \leq n$, we have $u_{k_1}\le u_{k_2}$, and $u_n=\iota_a^0$. Since the process $(\Tilde{\xi}_t)_{t\geq 0}$ satisfies the strong Markov property (point \eqref{Le processus des particules vérifie propriété Markov forte} of Lemma \ref{Lemme Propriétés des particules}), we have:
\begin{align*}
    \P_\eta[\iota_a^0<+\infty]&= \P_\eta[u_{n-1}<+\infty,~\exists t>u_{n-1},~\Tilde{\xi}_t(a)=(0,0)]\\
    &= \P_\eta[u_{n-1}<+\infty]\P_{\Tilde{\xi}_{u_{n-1}}}[u_n<+\infty]\\
    &= \P_\eta(u_1<+\infty) \prod_{i=1}^{n-1} \P_{\Tilde{\xi}_{u_{n-i}}}[u_{n-i+1}<+\infty].
\end{align*}
Starting from the initial configuration $\Tilde{\xi}_{u_{n-i}}$, $P_0^a\in \mathcal{N}_i=\{x\in \Z^d: |x|_1=i\}$. Furthermore, when particle $a$ reaches a distance $i-1$ from the origin for the first time, there could have been an infection instead of the exchange that allowed the particle to move (recall how particles move \eqref{Une particule se déplace avec contrainte}). The probability of infection before a stirring time on an edge is $\frac{\lambda}{\lambda+1}\in (0,1)$, and such an infection would imply that $u_{n-i+1}=+\infty$. Using similar arguments for $\P_\eta(u_1<+\infty)$, we get:

\begin{align*}
    \P_\eta[\iota_a^0<+\infty]\le \left(\frac{1}{1+\lambda}\right)^{n}. 
\end{align*}
Hence, we have:
\begin{align*}
    \sum_{a\in \Z^d}\P(\iota_a^0<+\infty)&= \sum_{n\in \N} \Card(\{a\in \Z^d: |a|_1=n\})\left( \frac{1}{1+\lambda}\right)^n<+\infty,
\end{align*}
and by the Borel-Cantelli lemma, we deduce \eqref{Fixation Nombre fini de billes} for $x=0$.

\noindent\underline{Step 2.} We prove the third point of Theorem \ref{Théorème Fixation RMS}. We have shown that for all $x\in \Z^d$, the set
$$H^x=\{a\in\Z^d: \iota^x_a<+\infty\}$$
is finite a.s. (see \eqref{Fixation Nombre fini de billes}). Moreover, each time a healthy particle moves, it does so by swapping with an infected particle. For each such swap, the healthy particle could have been infected by the infected particle with a rate of at least $\lambda>0$, and these potential infections are independent. Thus, a healthy particle moves only a finite number of times a.s. This implies that almost surely, after some time $T>0$, all healthy particles in $H^x$ stop moving. Consequently, after time $T>0$, the site $x$ is either occupied by the same healthy particle forever or by an infected particle forever.

Finally, two neighboring sites cannot fix in different states: once fixed, the infected site would infect the healthy site at rate $\lambda>0$. Therefore, all sites fix in the same state a.s. This proves the third point of Theorem \ref{Théorème Fixation RMS}.


\subsection{Proof of the second point of Theorem \ref{Théorème Fixation RMS}} \label{Fixation Preuve Dim 1-2 étape 2} 

Here, the dimension $d$ is one or two. We define, for $y\in \xi_0$, the process $(M_t^y)_{t\geq 0}$, which gives, for all $t\ge 0$, the site reached by following the graphical construction path starting from $(y,0)$, taking all stirring arrows (with no constraints on the states of the incident sites), and only those, up to time $t$. Note that $M_t^y$ is always on an infected site, as $M_0^y\in \xi_0$. The process $(M_t^y)_{t\geq 0}$ is a simple random walk in continuous time with rate $2d$. Since this random walk is recurrent, it implies that every site $z\in \Z^d$ is occupied by an infected particle infinitely often a.s. However, as shown in the previous proof, all sites fix in the same state from a certain time onward a.s., implying that all sites fix in the infected state a.s.

\subsection{Proof of the third point of Theorem \ref{Théorème Fixation RMS}}

Here, $d\ge 1$ and $\lambda>2d\lambda_c(d)$. We aim to show that
\begin{equation}
    \P(\forall y\in \Z^d,~\{t\geq 0: y\in \xi_t\} \text{ is unbounded})=1, \label{Fixation Ensemble des temps où x est infecté est non borné}
\end{equation}
which was shown in the previous proof using a simple random walk, but here the latter is transient because $d\geq 3$. To prove \eqref{Fixation Ensemble des temps où x est infecté est non borné}, we use the asymptotic shape theorem proven for $\lambda>2d\lambda_c(d)$ (see Theorem \ref{Théorème TFA d>1}). Let $y\in \Z^d$, $t_0>0$, and $z$ be the smallest infected site of $\xi_{t_0}$ for the lexicographical order. Let us prove that there exists $t>t_0$ such that $y$ is infected at time $t$. We define the process $(\overline{\xi}_{t}^{t_0})_{t\ge 0}$, which is a RMS starting from the initial configuration $\overline{\xi}^{t_0}_{0}=\{z\}$, constructed using the same environment as the RMS $(\xi_{t})_{t\ge 0}$, but shifted in time by $\theta_{t_0}$: for all $t\ge 0$, we set
$$\overline{\xi}^{t_0}_t=\xi_t\circ\theta_{t_0} \circ T_{z}.$$
It is clear that this process is a lower bound for the initial RMS $(\xi_{t})_{t\ge t_0}$ starting from time $t_0$: for all $t\ge 0$, we have $\overline{\xi}^{t_0}_t\subset \xi_{t_0+t}$. Furthermore, since $\lambda>2d\lambda_c(d)$, we can apply Theorem \ref{Théorème TFA d>1} to the process $(\overline{\xi}^{t_0}_{t})_{t\ge 0}$, which implies in particular that almost surely, there exists $T>0$ such that $y\in \overline{\xi}^{t_0}_{T}$. Consequently, $y\in \xi_{T+t_0}$ a.s. This proves \eqref{Fixation Ensemble des temps où x est infecté est non borné}.

\section{Proof of Theorem \ref{Théorème Survie forte et faible sur Td}}

Let $d\ge 1$ and $T_d$ be a homogeneous tree of degree $d+1$, that is, an infinite tree for which all vertices (which we still call \emph{sites}) have degree $d+1$. We still write $y\sim x$ if $x\in T_d$ and $y\in T_d$ are neighbors.
The tree $T_1$ is simply $\Z$, so we consider $d\ge 2$. We fix a site $r\in T^d$, and we define a map $l_r:T_d\to \Z$ such that:
\begin{itemize}
    \item We have $l_r(r)=0$.
    \item For all $x\in T_d$, we have $l_r(y)=l_r(x)-1$ for exactly one site $y\sim x$, and $l_r(y)=l_r(x)+1$ for exactly $d$ sites $y\sim x$.
\end{itemize}
We simply denote by $l$ this map. We say that $l(x)$ is the \emph{level} of $x$ in the tree $T_d$, which can also be seen as the generation number of site $x$. See Figure~\ref{Figure: Arbre homogene} for an example with $d=2$.

Let $A\subset T_d$. For all $A\subset T_d$, we define the RMS and the CPS with initial configuration $A$ on $T_d$, denoted by $(\xi_t)_{t\ge 0}$, in the same way as on $\Z^d$. 

\begin{figure}[H]
    \centering
    \includegraphics[height=8cm, width=8cm]{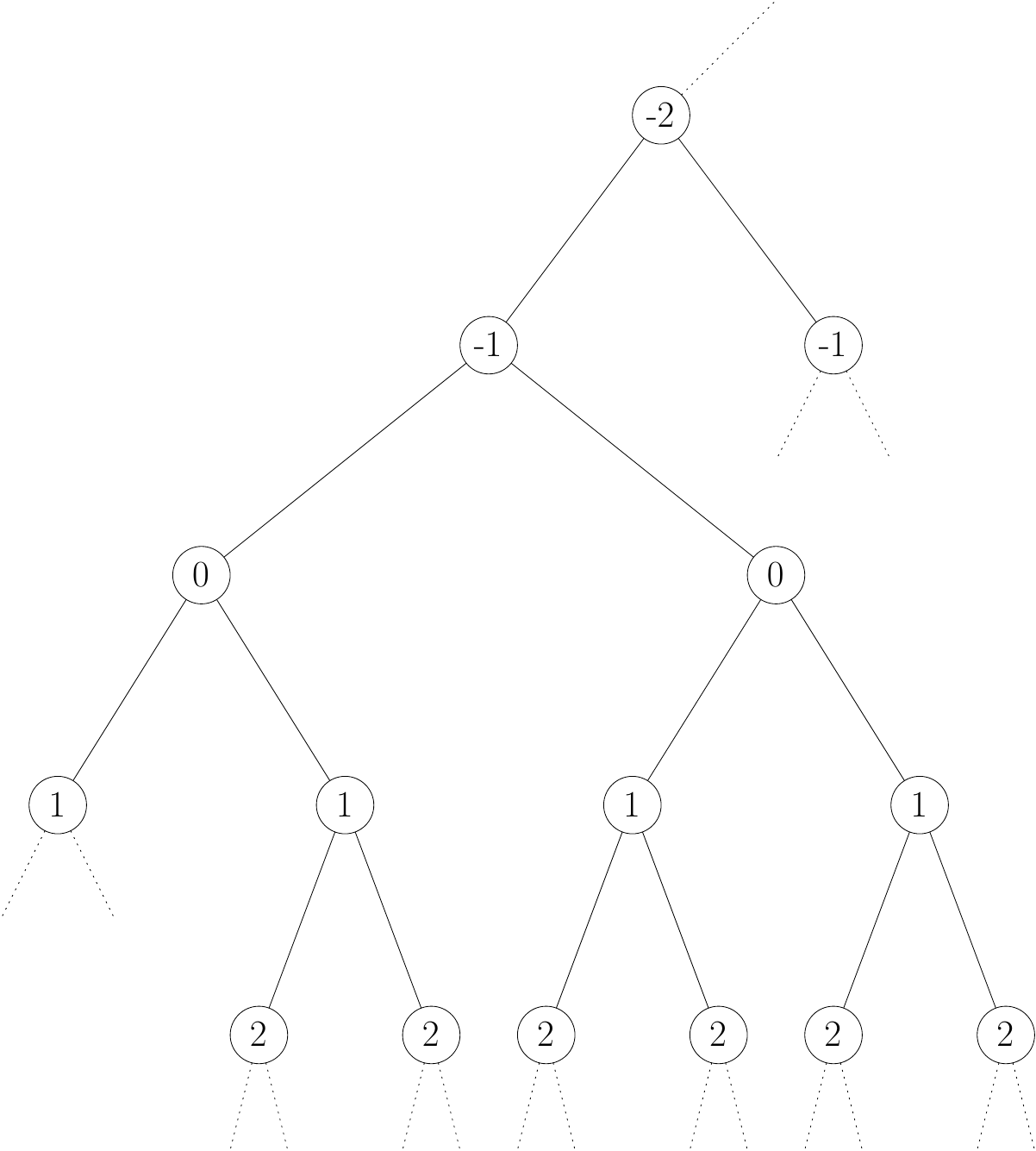}
    \caption{A part of the homogeneous tree $T_2$. On each site, we write its level in the tree.}
    \label{Figure: Arbre homogene}
\end{figure}

\subsection{Proof of Theorem \ref{Théorème Survie forte et faible sur Td} for the RMS}

Clearly, no matter the infection rate $\lambda$, the $\RMS(\lambda,1)$ survives weakly, since there is no healing in this model. We adapt Liggett's proof \cite{LiggettSIS} of the existence of an interval of infection rates $\lambda$ such that the $\CP(\lambda,1)$ survives weakly but not strongly on $T_d$. It amounts to prove that, for all $\nu>0$, there exists $\lambda_s^{\RMS}(d,\nu)>0$ such that, for all $\lambda<\lambda_s^{\RMS}$, the $\CPS(\lambda,1,\nu)$ does not survive strongly.

We define, for all $\rho\in (0,1)$ and $A\subset T_d$ finite, the quantity
$$w_\rho(A):=\sum_{x\in A} \rho^{l(x)}.$$
\noindent\underline{Step 1:} We start by proving that the process $(w_\rho(A_t))_{t\ge 0}$ is a positive supermartingale, for $\rho=\frac{1}{\sqrt{d}}$ and all $0<\lambda\le \frac{d+1}{2\sqrt{d}}-1$. To do that, we show that we have, for all finite $A\subset T_d$:
\begin{align}
    \frac{d}{dt}\E^A\left[w_\rho(\xi_t) \right]_{|t=0}\le 0. \label{Survie faible: dérivée en 0 négative}
\end{align}
By Theorem B3 of Liggett's book \cite{LiggettSIS}, we have:
\begin{align}
    \frac{d}{dt}\E^A\left[w_\rho(\xi_t) \right]_{|t=0}&=\sum_{x\in A} \left[ \lambda \sum_{\substack{y\sim x \\ y\notin A}} \rho^{l(y)}+1\times \sum_{\substack{y\sim x \\ y\notin A}} \left(\rho^{l(y)}-\rho^{l(x)}\right)\right] \notag\\
    &=\sum_{x\in A} \rho^{l(x)} \left[\sum_{\substack{y\sim x \\ y\notin A \\ l(y)=l(x)+1}} \left[(\lambda+1)\rho-1\right]+ \sum_{\substack{y\sim x \\ y\notin A \\ l(y)=l(x)-1}} \left[\frac{\lambda+1}{\rho}-1\right]\right]. \label{Dérivée de l'espérance de w_rho étape 1}
\end{align}

Note that we always have $\frac{\lambda+1}{\rho}-1\ge 0$, but for some values of $\lambda$ and $\rho$, we could have $(\lambda+1)\rho-1\le 0$.

We can split the sum \eqref{Dérivée de l'espérance de w_rho étape 1} into sums on the connected components of $A$, and prove that each of these sums is negative. Therefore, we can suppose without loss of generality that the set $A$ is connected. In this case, there exists a unique site $x_0\in A$ such that $x_0$'s parent is not in $A$. Moreover, this site $x_0$ is the only site in $A$ which verifies
$$l(x_0)=\min\{l(y):y\in A\}.$$
For all $n\in \Z$, we denote by:
\begin{itemize}
    \item $A_n$ the set of sites that are in $A$ and have level $n$;
    \item $a_n$ the quantity $\Card(A_n)$.
\end{itemize}
Note that we have $w_\rho(A)=\sum_{n\ge l(x_0)} a_n \rho^n$, and the quantity $da_n-a_{n+1}$ corresponds to the number of sites of level $n+1$ that are not in $A$, but have a parent in $A$. In other words, it is the number of sites of level $n+1$ that are not in $A_{n+1}$, but can be infected by someone in $A$. With these notation, \eqref{Dérivée de l'espérance de w_rho étape 1} can be rewritten as:
\begin{align*}
    &\sum_{n\ge l(x_0)} \rho^{n} (da_n-a_{n+1}) \left[(\lambda+1)\rho-1\right]+\left[\frac{\lambda+1}{\rho}-1\right]\rho^{l(x_0)} \notag\\
    &=\left[dw_\rho(A)-\left(\frac{1}{\rho}w_\rho(A)-a_{l(x_0)}\rho^{l(x_0)-1}\right)\right] \left[(\lambda+1)\rho-1\right]+  \left[\frac{\lambda+1}{\rho}-1\right]\rho^{l(x_0)} \notag\\
    &=\left[\left(d-\frac{1}{\rho}\right)w_\rho(A)+\rho^{l(x_0)-1}\right]\left[(\lambda+1)\rho-1\right]+  \left[\frac{\lambda+1}{\rho}-1\right]\rho^{l(x_0)}.
\end{align*}
For $\rho=\frac{1}{\sqrt{d}}$ and $\lambda<\frac{d+1}{2\sqrt{d}}-1$, we have:
\begin{align}
    &\left[\left(d-\sqrt{d}\right)w_\frac{1}{\sqrt{d}}(A)+\left(\frac{1}{\sqrt{d}}\right)^{l(x_0)-1}\right]\left[\frac{\lambda+1}{\sqrt{d}}-1\right]+\left[\sqrt{d}(\lambda+1)-1\right]\left(\frac{1}{\sqrt{d}}\right)^{l(x_0)}\notag\\
    &\le \left(\frac{1}{\sqrt{d}}\right)^{l(x_0)} \left[d\left[\frac{\lambda+1}{\sqrt{d}}-1\right]+\sqrt{d}(\lambda+1)-1\right] \label{Majoration avec w_rho 1 RMS}\\
    &\le \left(\frac{1}{\sqrt{d}}\right)^{l(x_0)} \left[2\sqrt{d}(\lambda+1)-(d+1)\right] \label{Majoration avec w_rho 2 RMS}<0,
\end{align}
where:
\begin{itemize}
    \item we obtain \eqref{Majoration avec w_rho 1 RMS} by using that $w_\frac{1}{\sqrt{d}}(C)\ge a_{l(x_0)}\left(\frac{1}{\sqrt{d}}\right)^{l(x_0)}=\left(\frac{1}{\sqrt{d}}\right)^{l(x_0)}$ and that $\frac{\lambda+1}{\sqrt{d}}-1<0$,
    \item we have \eqref{Majoration avec w_rho 2 RMS} because of the choice of $\lambda$ and $d\ge 2$,
\end{itemize} 
and so we proved \eqref{Survie faible: dérivée en 0 négative}. 

Now let us prove that \eqref{Survie faible: dérivée en 0 négative} implies that $(w_\rho(\xi_t))_{t\ge 0}$ is a supermartingale. We denote by $(\mathcal{F}_t)_{t\ge 0}$ the natural filtration associated to the process $(\xi_t)_{t\ge 0}$, and by $\mathcal{L}=\mathcal{L}^{\RMS}_{\lambda,1}$ the generator of the process $(\xi_t)_{t\ge 0}$. By Theorem 6.2 of \cite{LeGall2016}, the process
$$\left(w_\rho(\xi_t)-\int_0^t \mathcal{L}w_\rho(\xi_s)ds\right)_{t\ge 0}$$
is a $\mathcal{F}_t$-martingale. Since we have, for all finite $A\subset \Z^d$,
\begin{align*}
\mathcal{L}w_\rho(\eta)&=\frac{d}{dt}\E^A[w_\rho(\xi_t)]_{|t=0}<0,
\end{align*}
then we have, for all $t,s\ge 0$,
\begin{align*}
    \E[w_\rho(\xi_{t+s})|\mathcal{F}_t]&=\E\left[w_\rho(\xi_{t+s})-\int_0^{t+s} \mathcal{L}w_\rho(\xi_{u})du|\mathcal{F}_t\right]+\E\left[\int_0^{t+s} \mathcal{L}w_\rho(\xi_u)du|\mathcal{F}_t\right]\\
    &=w_\rho(\xi_{t})-\int_0^{t} \mathcal{L}w_\rho(\xi_{u})du+\int_0^{t} \mathcal{L}w_\rho(\xi_{u})du+\E\left[\int_t^{t+s} \mathcal{L}w_\rho(\xi_{u})du|\mathcal{F}_t\right]\\
    &\le w_\rho(\xi_{t}),
\end{align*}
and $(w_\rho(\xi_t))_{t\ge 0}$ is a positive supermartingale.\\

\noindent\underline{Step 2:} Let us show that all sites are fixing on a state after a long enough time. Since the process $(w_\rho(\xi_t))_{t\ge 0}$ is a positive supermartingale for $\rho=\frac{1}{\sqrt{d}}$ and $0<\lambda<\frac{d+1}{2\sqrt{d}}-1$ (by Step $1$), then for these parameters, $(w_\rho(\xi_t))_{t\ge 0}$ converges a.s.

Let $n\in \Z$ and $x\in T_d$ be a site such that $|l(x)|\le n$. If an infection or an exchange happens on $x$ at time $t$, then the quantity $w_\rho(\xi_t)$ increases or decreases of at least a constant $\alpha_n>0$, which does not depend on the time $t$. Since the process $(w_\rho(\xi_t))_{t\ge 0}$ converges a.s., we deduce that each site $x$ such that $|l(x)|\le n$ is fixing on infected or healthy after a long enough time. Moreover, two neighboring sites cannot fix on a different state, since they would have positive probability to interact. Therefore, they all fix on the same state. \\

\noindent\underline{Step 3:} Let us show that the sites fix on healthy after a long enough time a.s. Suppose by contradiction that a site fixes on infected after a long enough time a.s. Then all the sites fix on infected after a long enough time a.s. 
This implies that:
$$\sum_{x\in T^d} \rho^{l(x)}\le \lim_{t\to +\infty} w_{\rho} (\xi_t)\quad \text{a.s.},$$
which is absurd since $\sum_{x\in T^d} \rho^{l(x)}\ge\sum_{n\in \N} \left(\frac{1}{\rho}\right)^{n} =+\infty$.\\

\noindent\underline{Step 4:} Finally, let us show \eqref{Bornes paramètre critique survie forte RMS}. The bound from below has just been proved. For the bound from above, we couple the $\RMS(\lambda,1)$ with a $\CP(\lambda,d+1)$ in the same way as we did on $\Z^d$ in Lemma \ref{Lemme_sur_le_couplage}. The healing rate is $d+1$ here, since each site has $d+1$ neighbors. Moreover, denoting by $\lambda_s^{\CP}(d)$ the critical parameter for the strong survival of the contact process of infection rate $\lambda$ and of healing rate $1$ on $T_d$, then Theorem 4.65 of Liggett's book \cite{LiggettIPS} gives
$$\lambda_s^{\CP}(d)\le \frac{1}{\sqrt{d}-1},\quad \text{and so }\lambda_s^{\RMS}(d)\le \frac{d+1}{\sqrt{d}-1}.$$

\subsection{Proof of Theorem \ref{Théorème Survie forte et faible sur Td} for the CPS}

We follow the same steps as in the preceding proof. 

\noindent\underline{Step 1:} Let us show that, for $\rho=\frac{1}{\sqrt{d}}$ and all
$0<\lambda<\nu\left(\frac{d+1}{2\sqrt{d}}-1\right)$, we have:
\begin{align}
    \frac{d}{dt}\E^A\left[w_\rho(\xi_t) \right]_{|t=0}\le 0. \label{Survie faible: dérivée en 0 négative CPS}
\end{align}
In the same manner as for the RMS, and with the same notations, we have:
\begin{align*}
    \frac{d}{dt}\E^A\left[w_\rho(\xi_t) \right]_{|t=0}&=\sum_{x\in A} \left[\lambda \sum_{\substack{y\sim x \\ y\notin A}} \rho^{l(y)}+\nu \sum_{\substack{y\sim x \\ y\notin A}} \left(\rho^{l(y)}-\rho^{l(x)}\right)- \rho^{l(x)}\right]\\
    &=\nu\sum_{x\in A}  \left[\frac{\lambda}{\nu} \sum_{\substack{y\sim x \\ y\notin A}} \rho^{l(y)}+1\times \sum_{\substack{y\sim x \\ y\notin A}} \left(\rho^{l(y)}-\rho^{l(x)}\right)\right]- \sum_{x\in A}\rho^{l(x)}.
\end{align*}
Remember that in the proof for the RMS, we showed that:
$$\nu\sum_{x\in A} \left[\frac{\lambda}{\nu} \sum_{\substack{y\sim x \\ y\notin A}} \rho^{l(y)}+1\times \sum_{\substack{y\sim x \\ y\notin A}} \left(\rho^{l(y)}-\rho^{l(x)}\right)\right]<0$$
for $\rho=\frac{1}{\sqrt{d}}$ and $\frac{\lambda}{\nu}<\frac{d+1}{2\sqrt{d}}-1$. Then for $\rho=\frac{1}{\sqrt{d}}$ and $\lambda<\nu\left[\frac{d+1}{2\sqrt{d}}-1\right]$, we have \eqref{Survie faible: dérivée en 0 négative CPS}.\\

\noindent\underline{Step 2 and 3:} They are exactly the same as for the RMS. \\

\noindent\underline{Step 4:} Now let us bound from above the critical parameter $\lambda_s^{\CPS}(d,\nu)$. Similarly to the RMS case, we couple a $\CPS(\lambda,1,\nu)$ with a $\CP(\lambda,(d+1)\nu+1)$. As for the RMS, we use that
$\lambda_s^{\CP}(d)\le \frac{1}{\sqrt{d}-1}$, which gives: 
$$\lambda_s^{\CPS}(d,\nu)\le \frac{(d+1)\nu+1}{\sqrt{d}-1}.$$
Therefore we have 
\begin{align*}
        \nu\left(\frac{d+1}{2\sqrt{d}}-1\right)\le\lambda_s^{\CPS}(d,\nu)\le\frac{(d+1)\nu+1}{\sqrt{d}-1}.
\end{align*}
On the other hand, if we denote by $\lambda_w^{\CP}(d)$ the critical parameter for the weak survival of the $\CP(\lambda,1)$, then Theorem 4.1 of Liggett's book \cite{LiggettIPS} gives
$$\lambda_w^{\CP}(d)\le \frac{1}{d-1}.$$
So the coupling of a $\CPS(\lambda,1,\nu)$ and a $\CP(\lambda,(d+1)\nu+1)$ also gives:
$$\lambda_w^{\CPS}(d,\nu)\le \frac{(d+1)\nu+1}{d-1}.$$
Therefore, our bounds give $\lambda_w^{\CPS}(d,\nu)<\lambda_s^{\CPS}(d,\nu)$ if and only if:
\begin{align}
    \nu\left(\frac{d+1}{2\sqrt{d}}-1\right)-\frac{(d+1)\nu+1}{d-1}> 0 \label{Condition arbre}
\end{align}
A quick study of the map $f:x\mapsto x^2-4x^{\frac{3}{2}}-1$ shows that $f$ is negative on $(0,16]$ and positive on $[17,+\infty)$. Therefore:
\begin{itemize}
    \item If $d\le 16$, we have 
    $$\eqref{Condition arbre}\iff \nu<\frac{2\sqrt{d}}{d^2-4d^{\frac{3}{2}}-1}<0.$$
    \item If $d\ge 17$, we have 
    $$\eqref{Condition arbre}\iff \nu>\frac{2\sqrt{d}}{d^2-4d^{\frac{3}{2}}-1}>0.$$
\end{itemize}
It implies that $W$ is non-empty if and only if $d\ge 17$.

\section{Discussion} \label{Section discussion}

\subsection{Asymptotic shape theorem for low infection rates}

In dimension $d>1$, Durrett and Griffeath \cite{DurrettGriffeath1982} proved the asymptotic shape theorem for the contact process, for infection rates $\lambda$ large enough, using embedded one-dimensional contact processes. One could want to proceed in a similar way for the RMS. The problem is that the natural coupling between a RMS and an embedded one-dimensional RMS is not monotone, in the sense that we do not necessarily have $\xi^1_t\subset \xi^d_t$, where $(\xi^1_t)_{t\ge 0}$ (resp. $(\xi^d_t)_{t\ge 0}$) is an embedded one-dimensional RMS (resp. a RMS in dimension $d$): see Figure~\ref{Couplage entre RMS}. The problem is the same for the CPS: the stirring dynamics break the monotonicity.

\begin{figure}
    \centering
    \includegraphics[scale=0.6]{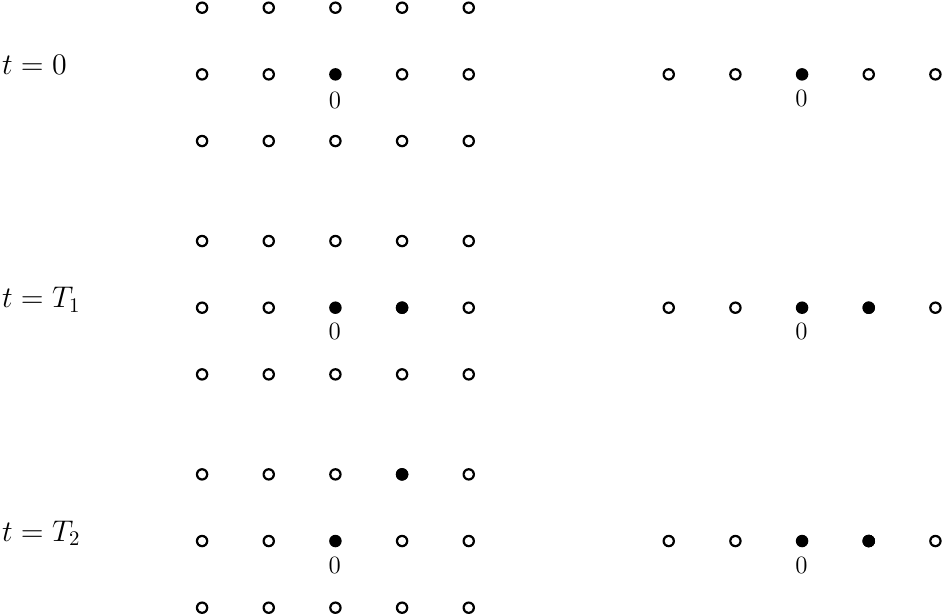}
    \caption{A two-dimensional contact process (left side) and an embedded one-dimensional contact process on $\Z\times \{0\}$. Infected sites are black, healthy sites are white. A time $t=0$, both processes start from the initial state $\{0\}$. At time $t=T_1$, the first interaction occurs: site $(0,0)$ infects site $(1,0)$, and so site $0$ infects site $1$ in the one-dimensional process. Time $t=T_2$ is a stirring time: site $(1,0)$ and site $(1,1)$ exchange their states, and nothing happens in the one-dimensional process. At this time, we have $\xi_{T_2}^1\not\subset \xi_{T_2}^d$.}
    \label{Couplage entre RMS}
\end{figure}

Another way to deal with the case of low infection rates could be to use Proposition \ref{Théorème Proposition sur densité de sites infectés} to prove that the growth is at least linear \eqref{ALL}. For $x=0$, the idea is that at some time $t$, there are around $t^d$ infected sites (Proposition \ref{Théorème Proposition sur densité de sites infectés}), relatively close to the origin (coupling with RM, Lemma \ref{Lemme_sur_le_couplage}), with high probability. At this time, imagine that there is a particle placed on each infected site, each of them following the time lines of the graphical construction. These particles are all making a simple random walk at rate $2d\nu$, where $\nu$ is the stirring rate. If these walks were independent, we could prove that the origin is infected at linear speed, since there are numerous infected particles doing a random walk near the site. But we did not manage to overcome the strong dependencies between these random walks.

\subsection{Bezuidenhout and Grimmett's block construction}

Bezuidenhout and Grimmett \cite{BezuidenhoutGrimmett1990} made a smart block construction in order to compare the contact process to an oriented percolation. It is possible to deduce from it many properties, such as the fact that the contact process dies out at criticality, or that the critical parameters for weak and strong survival are equal. It can also be used to prove \eqref{SC} for the contact process. One could want to use a similar construction adapted to the RMS and the CPS to prove \eqref{SC} and \eqref{ALL} for these models, or to prove that the CPS dies out at criticality. But this construction relies on the FKG inequality to prove that some events have high enough probabilities. By Theorem 2.14 of \cite{LiggettIPS}, the RMS (resp. the CPS) on a finite subset $S\subset \Z^d$ satisfies the FKG inequality if and only if its generator, rewritten in the form
$$\L f(\xi)=\sum_{\eta\in \{0,1\}^{S^d}} \rho(\xi,\eta)[f(\eta)-f(\xi)],$$
with $\rho(\xi,\eta)$ the rate at which the process goes from the configuration $\xi$ to $\eta$, satisfies:
$$\rho(\eta,\xi)>0 \text{ implies that }\eta\le \xi\text{ or }\eta\ge \xi.$$
But with the stirring, an infected site and a healthy neighbor on a configuration $\xi$ can exchange their states at a positive rate, and the configuration $\eta$ obtained does not check $\eta\le \xi$ nor $\eta\ge \xi$. Therefore, the FKG inequality appears to be false in general for both the RMS and the CPS on $\Z^d$, and it is therefore not possible to directly adapt Bezuidenhout and Grimmett's construction for our models. 
\printbibliography

\end{document}